\newfont{\cyr}{wncyr10 scaled 1100}
\newfont{\cyrr}{wncyr9 scaled 1000}
\theoremstyle{plain}
\newtheorem{theorem}{Theorem}[section]
\newtheorem{proposition}[theorem]{Proposition}
\newtheorem{lemma}[theorem]{Lemma}
\theoremstyle{definition}
\newtheorem{definition}[theorem]{Definition}
\theoremstyle{remark}
\newtheorem{remark}[theorem]{Remark}
\newtheorem*{acknowledgements}{Acknowledgements}
\newcommand{\Q}{\mathbb Q}
\newcommand{\N}{\mathbb N}
\newcommand{\Z}{\mathbb Z}
\newcommand{\C}{\mathbb C}
\DeclareMathOperator{\Spec}{Spec}
\DeclareMathOperator{\Pic}{Pic}
\DeclareMathOperator{\End}{End}
\DeclareMathOperator{\Aut}{Aut}
\DeclareMathOperator{\Frob}{Frob}
\DeclareMathOperator{\Hom}{Hom}
\DeclareMathOperator{\Div}{Div}
\DeclareMathOperator{\Gal}{Gal}
\DeclareMathOperator{\GL}{GL}
\DeclareMathOperator{\PGL}{PGL}
\DeclareMathOperator{\M}{M}
\DeclareMathOperator{\Spf}{Spf}
\newcommand{\res}{\mathrm{res}}
\newcommand{\har}{\mathrm{har}}
\definecolor{Indigo}{rgb}{0.2,0.1,0.7}
\definecolor{Violet}{rgb}{0.5,0.1,0.7}
\definecolor{White}{rgb}{1,1,1}
\definecolor{Green}{rgb}{0.1,0.9,0.2}
\newcommand{\longepi}{\mbox{$\relbar\joinrel\twoheadrightarrow$}}
\def \mint {\times \hskip -1.1em \int}
\newcommand{\p}{\mathfrak p}
\newcommand{\cO}{{\mathcal O}}
\newcommand{\E}{{\mathcal E}}
\newcommand{\HH}{{\mathcal H}}
\newcommand{\T}{\mathcal T}
\newcommand{\F}{\mathbb F}
\newcommand{\PP}{\mathbb P}
\begin{document}
\title[]{Exceptional zero formulae for anticyclotomic $p$-adic $L$-functions of elliptic curves in the ramified case}
\author{Matteo Longo and Maria Rosaria Pati}
\maketitle
\begin{abstract}
Iwasawa theory of modular forms over anticyclotomic $\Z_p$-extensions of imaginary quadratic fields has been studied by several authors,
starting from the works of Bertolini-Darmon and Iovita-Spiess, under the crucial assumption that the prime $p$ is \emph{unramified} in $K$. We start in this article the systematic study of anticyclotomic $p$-adic $L$-functions when $p$ is \emph{ramified} in $K$. In particular, when 
$f$ is a weight $2$ modular form attached to an elliptic curve $E/\Q$ having 
multiplicative reduction at $p$, and $p$ is ramified in $K$, 
we show an analogue of the exceptional zeroes phenomenon investigated by 
Bertolini-Darmon in the setting when $p$ is inert in $K$. More precisely,  we consider 
situations in which the  $p$-adic $L$-function $L_p(E/K)$ of $E$ over the anticyclotomic $\Z_p$-extension of $K$ 
does not vanish identically but, by sign reasons, has a zero at certain characters $\chi$ of the Hilbert class field of $K$.  
In this case we show that the value at $\chi$ of the first derivative of $L_p(E/K)$ 
is equal to the formal group logarithm of the specialization at $p$ of a global point on the elliptic curve (actually, this global point is a twisted sum of Heegner points).  
This  
generalizes similar results of Bertolini-Darmon, available when $p$ is inert in $K$ and 
$\chi$ is the trivial character.
\end{abstract}

\section{Introduction}

Cyclotomic Iwasawa theory of elliptic curves goes back to the work of B. Mazur \cite{Mazur} in the seventies, and 
since then its interest has constantly grown; spectacular results, among others, of K. Kato \cite{Kato} and Skinner-Urban \cite{SU} led 
to the proof of the Iwasawa Main Conjecture for a rational elliptic curve, ordinary at $p$, over the cyclotomic $\Z_p$-extension of $\Q$. Parallel to 
the cyclotomic theory, the anticyclotomic Iwasawa theory of elliptic curves has been developed, 
by several authors, among which M. Bertolini, H. Darmon, A. Iovita, M. Spiess, B. Howard, R. Pollack, T. Weston  
(\cite{BD-Heegner}, \cite{BD-CD}, \cite{BD-IMC}, \cite{BD1}, \cite{BDIS}, \cite{How1}, \cite{How2}, \cite{pollack-weston}).

Bertolini and Darmon, in \cite{BD-CD} and \cite{BD1} and later on all the articles dealing with anticyclotomic $p$-adic $L$-functions attached to a pair consisting of a modular form and an imaginary quadratic field $K$, make the crucial assumption that the prime $p$ is unramified in $K$. We start in this article the systematic study of the construction and special values of anticyclotomic $p$-adic $L$-functions attached to a modular form and an imaginary quadratic field $K$ when $p$ is ramified in $K$.

In the setting of Bertolini-Darmon, expecially in \cite{BD-Heegner} and \cite{BD-CD}, one fixes an elliptic curve $E/\Q$ and a quadratic imaginary field $K/\Q$ whose discriminant is prime to the conductor $N$ of $E$; one then factors $N=N^+N^-$ into a product of coprime integers 
in such a way that a prime number divides $N^+$ if and only if it is split in $K$, and consequently a prime number 
divides $N^-$ if and only if it is inert in $K$. Pick a prime number $p$ which does not divide the discriminant of $K$; 
we finally assume that the number of primes dividing $N^-$ is \emph{odd}. 
In this setting, Bertolini-Darmon defined a $p$-adic $L$-functions $L_p(E/K)$
%\footnote{facciamo $L_p(E/K)$ altrimenti questa $f$ non si sa chi è?}
of $E$ over $K$; 
this is an element of the Iwasawa 
algebra $\Z_p\llbracket\tilde G\rrbracket$ 
(where $\tilde G$ is the Galois group of the union of all ring class fields of $K$ of 
$p$-power conductor), characterized by the fact that for all finite order characters $\chi$ of $\tilde G$, the value 
$L_p(E/K)(\chi)=\chi\left(L_p(E/K)\right)$ interpolates special values of the 
complex $L$-function $L(E/K,\chi,s)$ of $E/K$ twisted by $\chi$. In the special case when 
$p$ is inert in $K$ and $p\mid N$, so that $E$ has multiplicative reduction at $p$, 
the main result of \cite{BD-CD}  expresses the value of the first $p$-adic derivative of $L_p(E/K)$ at the trivial character 
as a $p$-adic logarithm of a 
global point in $E(K)$, which is actually a Heegner point of $E$ of conductor $1$ arising from a Shimura 
curve uniformization of $E$. This remarkable result provides a completely 
$p$-adic construction of global points.  

As mentioned before, one of the aims of this paper is to investigate an analogue of the results in \cite{BD-CD} in which the crucial 
assumption made in \emph{loc. cit.} that $p\mid N$ and $p$ is inert in $K$ is replaced by the requirement that 
$p\mid N$ and $p$ is \emph{ramified} in $K$. We obtain results in the spirit of \cite{BD-CD}, expressing the $\chi$-value 
of the derivative of the $p$-adic $L$-function as the $p$-adic logarithm of Heegner points in the $\chi$-eigencomponent of the 
Mordell-Weil group of $E$, for all $\chi$ in a specific 
family of finite order characters of the Galois group of the union of the ring class fields of $p$-power conductor of $K$. Note that the case when $p$ is ramified 
is in some sense richer than the original setting of \cite{BD-CD} when $p$ is inert, since in the latter case 
only the value of the derivative of the $p$-adic $L$-function at the \emph{trivial} character can be studied, 
while in our setting the family of characters giving rise to the $p$-adic construction of global points might be 
strictly larger.  

We now explain in a more precise form the main results of this paper. 
Let $E$ be an elliptic curve over $\Q$ of conductor $N$ and let $p$ be a prime of multiplicative reduction for $E$, so that $p\parallel N$. Fix an imaginary quadratic field $K$ of discriminant $D$. The following assumption are made throughout:
\begin{itemize}
\item $\cO_K^\times=\{\pm 1\}$, i.e. $K\neq\Q(i)$ and $K\neq\Q(\sqrt{-3})$;\
\item $(N,D)=p$, i.e. $p$ is \emph{ramified} in $K$ and no other prime dividing $N$ ramifies in $K$.
\end{itemize}
Write $N$ as $$N=pN^-N^+$$ where the primes dividing $N^-$ are those inert in $K$, and the primes of $N$ dividing $N^+$ are those splitting in $K$. We make the following further assumptions:
\begin{itemize}
\item The integer $N^-$ is squarefree, i.e. $E$ has multiplicative reduction at the primes dividing $N$ which are inert in $K$;\
\item The integer $N^-$ is the product of an odd number of primes.
\end{itemize}
Under the above assumptions, let $K_\infty=\cup_n K_n$ where $K_n$ is the ring class field of $K$ 
of conductor $p^n$; this is the abelian extension of $K$ corresponding to $\Pic(\mathcal{O}_{p^n})$ by class field 
theory, where $\mathcal{O}_{p^n}=\Z+p^n\mathcal{O}_K$. Let $\tilde{G}=\Gal(K_\infty/K)$, which is isomorphic to
a product $\tilde G=T\times G$, where $G\simeq\Z_p$ and $T$ is a finite group. In the first 
part of the paper, following \cite{BDIS}, for an eigenform $\phi$ of even weight $k$ on $\Gamma_0(N)$ we construct an element in $\Z_p\llbracket \tilde G\rrbracket$ which 
is known, thanks to \cite[Theorem 1.4.2]{YZZ}, to interpolate the algebraic part $L^\mathrm{alg}(\phi,K,\chi,k/2)$ of the special value $L(\phi,K,\chi,k/2)$ of the complex 
$L$-function of $\phi$ over $K$ twisted by $\chi$, for all finite order ramified characters $\chi$ of $\tilde G$. Applying  all this to the modular eigenform of weight two attached to $E/\Q$ by modularity, we obtain a $p$-adic $L$-function $L_p(E/K,\chi,s)$, which is an element of $\Z_p\llbracket \tilde G\rrbracket$, interpolating the algebraic part of the special value $L(E/K,\chi,1)$ of the complex $L$-function of $E/K$ twisted by $\chi$.

Let $H_p$ be the maximal subextension of $H$, the Hilbert class field of $K$, in which the 
unique prime $\p$ of $K$ above $p$ splits completely. 
We consider the set $\mathcal{Z}$ of finite order characters $\chi$ of $\tilde{G}$ which 
factors through $H_p$. Fix such a character $\chi$ of $\tilde G$. 
Under our assumption, the elliptic curve $E$ can be uniformized by the Shimura 
curve $X=X_{N^+,pN^-}$ attached to the indefinite quaternion algebra $\mathcal{B}$ of discriminant $pN^-$ and 
the Eichler order $\mathcal{R}$ of level $N^+$; note that the number of primes dividing $N^-$ is odd. Let $P_H$ be a Heegner point of conductor $1$ in $E(H)$ 
arising from this uniformization, and 
construct the point 
\[P_\chi=\sum_{\sigma\in\Gal(H/K)}P_H^\sigma\otimes\chi^{-1}(\sigma)\]
in $(E(H)\otimes_\Z\bar\Q)^\chi$; here for any $\C[\Gal(H/K)]$-module $M$, we denote by
$M^\chi$ its $\chi$-eigencomponent. The main result of this paper is the following: 

\begin{theorem}\label{MainThm}
For all $\chi\in\mathcal{Z}$, 
\[L'_p(E/K)(\chi)=\frac{2}{[H:H_p]}\cdot\left(\log_E(P_\chi)-\log_E(\bar{P}_\chi)\right)\]
where $\log_E$ is the logarithm of the formal group of $E$ and $\bar{P}_\chi$ is the complex conjugate of $P_\chi$. 
\end{theorem}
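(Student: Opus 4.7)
The plan is to adapt the strategy of \cite{BD-CD} for the inert case to the present ramified setting, working throughout with the Cerednik-Drinfeld rigid analytic uniformization of $X=X_{N^+,pN^-}$ and with Tate's parametrisation $E(\bar\Q_p)\cong\bar\Q_p^\times/q_E^\Z$, which converts the formal group logarithm $\log_E$ into the $p$-adic logarithm $\log_p$. The argument splits into three main steps: an explicit integral formula for $L_p(E/K)(\chi)$ in terms of quaternionic modular forms paired with CM distributions on $\tilde G$; a direct computation of the first derivative along the $\Z_p$-direction of $\tilde G$, which produces an extra logarithmic factor inside the integral; and a conversion of this integral, via Tate, into $\log_E$ evaluated on a divisor of Heegner points coming from the indefinite uniformization.

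First, using the construction recalled in the introduction (following \cite{BDIS} and \cite[Theorem~1.4.2]{YZZ}), I would realise $L_p(E/K)$ as the Iwasawa function associated to a measure $\mu_\phi$ on $\tilde G$ built from the Jacquet-Langlands transfer of $\phi$ to the definite quaternion algebra of discriminant $pN^-$. A local root number computation, in which the parity of the number of primes dividing $N^-$ combines with the local sign at the \emph{ramified} prime $p$, shows that for $\chi\in\mathcal{Z}$ the value $L_p(E/K)(\chi)$ vanishes identically, so the content of the theorem is indeed the first derivative. Differentiating the defining integral along $G\simeq\Z_p$ brings a logarithmic factor inside; after passing to the indefinite quaternion algebra $\mathcal{B}$ via Jacquet-Langlands and invoking the Cerednik-Drinfeld uniformization, this logarithmic integral is computed by rigid analytic theta functions on the Mumford curve uniformizing $X$, and via Tate's parametrisation it becomes $\log_E$ of a divisor supported on Galois translates of $P_H$. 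Projecting onto the $\chi$-isotypic component and separating the two orbits swapped by complex conjugation produces the combination $\log_E(P_\chi)-\log_E(\bar P_\chi)$, while the constant $2/[H:H_p]$ emerges from averaging over the fibres of $\Gal(H/K)\twoheadrightarrow\Gal(H_p/K)$ together with a factor of $2$ arising from the symmetry of the CM divisor under complex conjugation.

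The main obstacle, and the substantive novelty with respect to \cite{BD-CD}, lies in the local analysis at the ramified prime $p$: both the vanishing on $\mathcal{Z}$ and the explicit form of the derivative rely on Waldspurger-type zeta integrals for the ramified quadratic extension $K_\p/\Q_p$, whose evaluation is genuinely different from the inert case and requires extending the quaternionic integration machinery of \cite{BDIS} and \cite{BD-CD} beyond the unramified setting. In particular, the embedding $K\hookrightarrow\mathcal{B}$ is ramified at $p$, which changes the local structure of the CM divisor on the Bruhat-Tits tree of $\PGL_2(\Q_p)$ and forces a careful redesign of the local contribution to the integral formula for $L'_p(E/K)(\chi)$.
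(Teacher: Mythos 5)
Your overall architecture matches the paper's (measure on $\tilde G$ from a harmonic cocycle, vanishing at the central point, derivative $=$ Coleman integral $=$ $\log_E$ of a Heegner divisor via Tate), but there is a genuine gap at the very first substantive step: the vanishing of $L_p(E/K)(\chi)$ for \emph{all} $\chi\in\mathcal{Z}$ cannot be obtained from a root number computation. The characters in $\mathcal{Z}$ are unramified at $p$, and the sign of $L(E/K,\chi,s)$ is $-1$ only when $a_p\chi(\p)=1$; when $a_p\chi(\p)=-1$ the sign is $+1$ and the complex $L$-value may be nonzero, yet the $p$-adic $L$-value still vanishes. This is precisely the exceptional zero phenomenon the paper is about: $\chi\in\mathcal{Z}$ lies outside the range of the interpolation formula (which is proved only for characters ramified at $p$), so the vanishing must be established locally. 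The paper does this by a purely combinatorial argument on the Bruhat--Tits tree: because $p$ ramifies in $K$, the unit group $\cO^\times/\Z[1/p]^\times$ has order $2$, the fundamental domain for its action on $K_{\p,1}^\times\simeq\PP_1(\Q_p)$ consists of the ends branching off the geodesic from $v$ to $\gamma v$, and the sum of the harmonic cocycle $c_f$ over the corresponding edges telescopes to $0$. This proves the stronger statement that every partial $L$-function $L_p(E/K,\Psi,1)$ vanishes, uniformly in $\chi$. Without this (or an equivalent local computation) your argument does not even reach the derivative for half the characters in $\mathcal{Z}$. Relatedly, your appeal to ``Waldspurger-type zeta integrals for $K_\p/\Q_p$'' is not what is needed here: neither the vanishing nor the derivative formula in the paper uses Waldspurger-type local integrals; Yuan--Zhang--Zhang enters only for the interpolation property and for the non-triviality of $P_\chi$, which are not part of the proof of the theorem.

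Two further points. First, there is no definite quaternion algebra of discriminant $pN^-$ (that integer has an even number of prime factors); the measure lives on the definite algebra $B$ of discriminant $N^-$, which is split at $p$ --- this splitting is what produces $B_p\simeq\M_2(\Q_p)$ and hence the tree and $\HH_p$ --- while $pN^-$ is the discriminant of the \emph{indefinite} algebra $\mathcal{B}$ whose Shimura curve $X$ is uniformized by $\Gamma\backslash\HH_p$. Second, the step converting the logarithmic integral into $\log_E(P_\chi)-\log_E(\bar P_\chi)$ requires identifying the localization of $P_H$ at $\p_H$ with the CM point $z_\Psi\in\HH_p$; since $K_\p/\Q_p$ is ramified, $z_\Psi$ lies in a ramified quadratic extension and the usual Cerednik--Drinfeld statement over $\Q_{p^2}$ does not apply directly. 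The paper resolves this with the Jordan--Livn\'e \emph{twisted} uniformization $X_{\Gamma_0}^\varphi\simeq X$ over $\Q_p$, together with the explicit formula $\eta_{\Psi,\infty}^{-1}(a)=(a-\bar z_\Psi)/(a-z_\Psi)$ and the modular interpretation of reduction of endomorphisms; this is also where the factor $2$ actually comes from (the double cover $K_{\p,1}^\times\twoheadrightarrow G$ with kernel $\cO_1^\times=\{\pm1\}$), not from a symmetry of the CM divisor under complex conjugation, while $[H:H_p]$ appears because the $[H:H_p]$ Galois translates of $P_H$ over $H_p$ all map to the same point $z_{\Psi^\sigma}$. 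You flag the ramification at $p$ as the main obstacle, which is correct, but the tool you propose to overcome it is not the one that works.
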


Of course, the question is when the point $P_\chi$ in the above theorem is non-torsion. 
To discuss this point, let $b_p$ be the $p$-th Fourier coefficient 
of the Theta series attached to $\chi$. Also, let $a_p=+1$ if $E$ has split multiplicative reduction at $p$, 
and $a_p=-1$ otherwise. If 
\begin{equation}\label{parity} 
a_p\cdot b_p=1\end{equation}
then the order of vanishing of $L(E/K,\chi,s)$ at $s=1$ is odd; thus in particular $L(E/K,\chi,1)$ is equal to zero. Under the assumption that $L'(E/K,1)\neq 0$, 
the point $P_\chi$ is non-torsion by \cite[Theorem 1.3.1]{YZZ} and therefore Theorem \ref{MainThm} gives a purely $p$-adic construction of the logarithm of a global point of the elliptic curve, in the 
spirit of \cite[Theorem B]{BD-CD}, As remarked above, note that in our setting the character $\chi$ needs not be trivial.

If \eqref{parity} is not satisfied, the situation looks more mysterious. The order of vanishing of $L(E/K,\chi,s)$ at $s=1$ is even in this case. Call $r$ this order of vanishing. Clearly, if $r\geq 2$, then the point $P_\chi$ is torsion 
by \cite[Theorem 1.3.1]{YZZ}. If $r=0$, that is $L(E/K,\chi,1)\neq 0$, at least assuming $E$ does not have CM, $(E(H)\otimes_\Z\bar\Q)^\chi=0$ by \cite[Theorem A']{Nek}; therefore, 
$P_\chi$ is torsion as well, and $\log_E(P_\chi)$ and $\log_E(\bar{P}_\chi)$ are both zero. 
Therefore, our theorem implies that $L'_p(E/K)(\chi)=0$. It might be interesting in this case to study the arithmetic significance of the second derivative $L''_p(E/K)(\chi)$ of the $p$-adic $L$-function 
valued at $\chi$. 

\begin{remark} 
S. Molina \cite{Molina-preprint} has recently obtained a construction of the $p$-adic $L$-function in the ramified case, following 
the strategy of Spiess (see \cite{Spiess}), and proving several results toward exceptional zero formulae in the more general 
context of totally real number fields; however, 
the author does not cover the case we are interested in. Moreover, the construction of the $p$-adic $L$-function in \cite{Molina-preprint} is different from our.\end{remark}

Finally, let us outline the main differences between the inert \cite{BD-CD} and the ramified case. Leaving  
to the following sections the discussion of the specific technical issues arising in the ramified case 
(for example, $H_p$ might be different from $H$ in our context, 
while $H=H_p$ in the inert case) one of the main differences between the ramified and the inert cases 
is related to the use of the Cerednik-Drinfeld uniformization theorem, which plays an important role in the $p$-adic 
analytic description of the image of Heegner points of conductor $1$ 
in the fiber at $p$ of Shimura curves, in the case when $p$ divides the discriminant of the quaternion algebra. 
Briefly, in the inert case the image of Heegner points of conductor $1$ in $X(\C_p)$ (where $X$ is as above 
the Shimura curve attached to the indefinite quaternion algebra of discriminant $pN^-$ and an Eichler order of level $N^+$)
can be described by means of the Cerednik-Drinfeld theorem and 
correspond to points in the subset $\Q_{p^2}-\Q_p$ of the Drinfeld upper half plane $\mathcal{H}_p=\C_p-\Q_p$, 
where $\Q_{p^2}$ is the quadratic \emph{unramified} extension of $\Q_p$; 
this is due to the fact that the Shimura curve $X/\Q_p$ is a Mumford curve, isomorphic over $\Q_{p^2}$ 
to quotients of the $p$-adic upper half plane by appropriate arithmetic subgroups of $\PGL_2(\Q_p)$. In the ramified 
situation, we need a more refined version of the Cerednik-Drinfeld theorem, since our points will belong to 
extension of a quadratic \emph{ramified} extension of $\Q_p$, for which the uniformization theorem does not hold. 
Therefore, we first need to describe $X/\Q_p$ over $\Q_p$ in terms of \emph{twisted Mumford curves} as 
in \cite[Theorem 4.3']{JL}, and then carefully describe the image of Heegner points in terms of these twisted objects. 

\begin{acknowledgements} 
We are grateful to A. Iovita for many interesting discussion about the topics of this paper.
M.L. is supported by PRIN 
\end{acknowledgements} 
\section{The anticyclotomic $p$-adic $L$-function of $\phi$ and $K$}\label{sec2}

We begin by setting some notation and recalling some definitions. Let 
\[\HH_p:=\PP_1(\C_p)-\PP_1(\Q_p)\]
be Drinfeld's upper half plane. The group $\PGL_2(\Q_p)$ acts on $\HH_p$ from the left by fractional linear transformations. Let $\T=\T_p$ be the Bruhat-Tits tree of $\PGL_2(\Q_p)$ and denote by $\vec{\E}(\T)$,
$\mathcal{E}(\T)$ and $\mathcal{V}(\T)$ the sets of oriented edges, edges and vertices of $\T$, respectively. 
For any $e\in\vec{\E}(\T)$ we denote by $V(e)\subset \HH_p$ the inverse image of $e$ under the 
canonical reduction map $r:\HH_p\rightarrow\mathcal{V}(\T)\cup\vec{\E}(\T)$. 

Let $B$ be the definite quaternion algebra of discriminant $N^-$. Since $p\nmid N^-$, $B_p:=B\otimes\Q_p$ is isomorphic to $\M_2(\Q_p)$. Fixing an isomorphism $\iota\colon B_p\rightarrow \M_2(\Q_p)$, we view the vertices of $\T$ as the maximal ($\Z$-)orders in $B_p$ and the edges as the Eichler orders of level $p$.
Let $R$ be an Eichler $\Z[1/p]$-order in $B$ of level $N^+$ (it is unique up to conjugation) and denote by $R_1^\times$ the group of the elements of reduced norm $1$ in $R$. Let $\Gamma :=\iota(R_1^\times)\subset \mathrm{PSL}_2(\Q_p)$ be the image of $R_1^\times$ under $\iota$.

Finally, let $\mathcal{P}_{k-2}$ be the $(k-1)$-dimensional $\C_p$-vector space of polynomials of degree at most $k-2$ with coefficients in $\C_p$. The linear group $\GL_2(\Q_p)$ acts on the right on $\mathcal{P}_{k-2}$ in the following way:
$$P(x)\cdot \beta:=\frac{(cx+d)^{k-2}}{(\mathrm{det}(\beta))^\frac{k-2}{2}}P\left(\frac{ax+b}{cx+d}\right),\qquad\beta=\begin{pmatrix}
a & b\\ c & d
\end{pmatrix},\quad P\in\mathcal{P}_{k-2}.$$
Since $\Q_p^\times\subset\mathcal{P}_{k-2}$ acts trivially, it is actually an action of $\PGL_2(\Q_p)$ on $\mathcal{P}_{k-2}$. This action also induces a left action on the dual space $\mathcal{P}_{k-2}^\vee:=\mathrm{Hom}(\mathcal{P}_{k-2},\C_p)$ given by
$$\beta\cdot\varphi(P):=\varphi(P\cdot\beta),\qquad\varphi\in\mathcal{P}_{k-2}^\vee.$$

We are now ready to introduce the following two definitions: 

\begin{definition}
A $p$-adic modular form of weight $k$ on $\Gamma$ % (or on $X$) 
is a $\C_p$-valued global rigid analytic function $f$ on $\HH_p$ satisfying
\begin{equation*}
f(\gamma z)=(cz+d)^k f(z)\qquad \text{for all}\ \gamma =\begin{pmatrix} a & b\\ c & d \end{pmatrix}\in\Gamma.
\end{equation*}
\end{definition}
\begin{definition}
A harmonic cocycle $c$ of weight $k$ on $\T$ is a $\mathcal{P}_{k-2}^\vee$-valued function on $\vec{\E}(\T)$ satisfying
$$c(e)=-c(\bar{e}),\qquad \sum_{\text{source}(e)=v}c(e)=0, \quad \forall v\in\mathcal{V}(\T),$$
where $\bar{e}$ is the unique edge obtained from $e$ by reversing the orientation.
\end{definition}
Denote by $C_\har(k)$ the $\C_p$-vector space of weight $k$ harmonic cocycles, and by $C_\har(k)^\Gamma$ the space of $\Gamma$-equinvariant harmonic cocycles of weight $k$, that is to say harmonic cocycles $c$ satisfying
$$c(\gamma e)=\gamma\cdot c(e)\qquad \forall  \gamma\in\Gamma.$$
For $c\in C_\har^\Gamma$, set
$$\langle c,c\rangle=\sum_{e\in\vec{\E}(\T)/\Gamma}w_e \langle c(e),c(e)\rangle,$$
where the sum is taken over a set of representatives for the $\Gamma$-orbits in $\vec{\E}(\T)$, the integer $w_e$ is the cardinality of the stabilizer of $e$ in $\Gamma$ and $\langle \hspace{0.1cm},\hspace{0.1cm} \rangle$ is the pairing on $\mathcal{P}_{k-2}^\vee$ defined in \cite[Sec. 1.2]{BDIS}.

\subsection{The $p$-adic modular form $f$ associated to $\phi$}
Denote $X=X_{N^+,pN^-}$ the Shimura curve (viewed as a scheme over $\Q$) 
attached to the quaternion algebra $\mathcal{B}=\mathcal{B}_{pN^-}$ 
of discriminant $pN^-$ and the Eichler order $\mathcal{R}=\mathcal{R}_{N^+}$ of level $N^+$ 
in $\mathcal{B}$. Let $R=\mathcal{R}[1/p]\subset B$ and $\Gamma=\iota(R_1^\times)$ be the image in $\mathrm{PSL}_2(\Q_p)$ of the elements of reduced norm $1$ in $R$. By the Cerednik-Drinfeld uniformization theorem, the quotient 
$\Gamma\backslash\HH_p$ is isomorphic to the set $X(\C_p)$ of $\C_p$-points of the Shimura curve $X$.

Let $\phi=\sum_{n=1}^\infty a_n q^n$ be a normalized eigenform of even weight $k$ on $\Gamma_0(N)$. Combining the Jacquet-Langlands correspondence and the Cerednik-Drinfeld uniformization theorem, we attach to $\phi$ a $p$-adic modular form $f$ of weight $k$ on $\Gamma$ such that
$$T_\ell(f)=a_\ell f\qquad \forall \ell\nmid pN^-,$$
where $T_\ell$ denotes the $\ell$-th Hecke correspondence on the Shimura curve $X$ defined above. 
This function is unique up to scaling by a nonzero element in $\C_p$.

\subsection{The harmonic cocycle $c_f$ associated to $f$}
We associate to $f$ a harmonic cocycle $c_f$ of weight $k$ defined by
$$c_f(e)(P)=\res_e(f(z)P(z)d(z)),\qquad P(z)\in\mathcal{P}_{k-2},$$
where $\res_e$ is the $p$-adic annular residue along $V(e)\subset \PP_1(\C_p)$. It can be shown that $c_f\in C_\har^\Gamma$ (see \cite[Lemma 3.13]{BD1}).

We normalize $f$ so that $\langle c_f,c_f\rangle=1$, it follows that $f$ is defined up to sign and that $c_f$ takes values in an extension at most quadratic of the field generated by the Fourier coefficients of $\phi$ (having fixed at the beginning an embedding of $\bar\Q$ in $\C_p$).

\subsection{The measure $\mu _f$ on $\PP_1(\Q_p)$ associated to $c_f$}\label{cocyclesec}
We associate now to the harmonic cocycle $c_f$ a measure $\mu _f$ on $\PP_1(\Q_p)$ in the following way.

First of all observe that $\PP_1(\Q_p)$ is identified with the space $\E_\infty (\T)$ of ends of $\T$. Recall that an end of $\T$ is an equivalence class of sequences $(e_n)_{n=1}^{\infty}$ of consecutive edges $e_n\in\vec{\E}(\T)$, where $(e_n)$ is identified with $(e_n')$ if and only if there exist $N$ and $N'$ with $e_{N+j}=e_{N'+j}'$ for all $j\geq 0$. After the identification of $\E_\infty(\T)$ with $\PP_1(\Q_p)$, the space $\E_\infty(\T)$ inherits a natural topology coming from the $p$-adic topology on $\PP_1(\Q_p)$. Each edge $e\in \vec{\E}(\T)$ corresponds to an open compact subset $U(e)\subset \E_\infty(\T)$ consisting of the ends having a representative containing $e$.

For all $e\in\vec{\E}(\T)$ and $P\in\mathcal{P}_{k-2}$ define
$$\mu_f(P\cdot\chi _{U(e)}):=\int_{U(e)}P(x)d\mu_f(x)=c_f(e)(P).$$
It extends to a functional on the space, which we denote by $\mathcal{A}_k$, of locally analytic $\C_p$-valued functions on $\PP_1(\Q_p)$ having a pole of order at most $k-2$ at $\infty$.

Extending the action of $\PGL_2(\Q_p)$ on $\mathcal{P}_{k-2}$ to $\mathcal{A}_k$, i.e. defining:
$$(\varphi*\beta)(x):=\frac{(cx+d)^{k-2}}{\mathrm{det}(\beta)^\frac{k-2}{2}}\varphi(\beta x),\qquad \beta\in\PGL_2(\Q_p),\ \varphi\in\mathcal{P}_{k-2},$$
one can easily see that $\mu_f$ satisfies the following properties:
\begin{lemma}
\begin{enumerate}
\item For any $P\in\mathcal{P}_{k-2}$, $\int_{\PP_1(\Q_p)}P(x)d\mu_f(x)=0$;\\
\item $\mu_f(\varphi*\gamma)=\mu_f(\varphi)$, for all $\gamma\in\Gamma$ and $\varphi\in\mathcal{A}_k$.
\end{enumerate}
\end{lemma}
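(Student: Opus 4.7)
The plan is to reduce both assertions directly to the two defining properties of $c_f\in C_\har(k)^\Gamma$, namely harmonicity at every vertex and $\Gamma$-equivariance. The key observation is that the functional $\mu_f$ on $\mathcal{A}_k$ is built as the extension of the rule $\mu_f(P\cdot\chi_{U(e)})=c_f(e)(P)$ from the elementary functions with $P\in\mathcal{P}_{k-2}$ and $e\in\vec{\E}(\T)$, so everything boils down to a verification on such generators.

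For part (1), I would fix any vertex $v\in\mathcal{V}(\T)$ and use the partition $\PP_1(\Q_p)\cong \E_\infty(\T)=\bigsqcup_{\text{source}(e)=v}U(e)$ to decompose $P=\sum_{\text{source}(e)=v}P\cdot\chi_{U(e)}$. Integrating term by term then yields
\[\int_{\PP_1(\Q_p)}P(x)\,d\mu_f(x)=\sum_{\text{source}(e)=v}c_f(e)(P)=\Bigl(\sum_{\text{source}(e)=v}c_f(e)\Bigr)(P),\]
which vanishes by the second harmonicity relation in the definition of $c_f$.

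For part (2), by linearity together with the extension by continuity from elementary functions to general elements of $\mathcal{A}_k$, it is enough to verify the equality for $\varphi=P\cdot\chi_{U(e)}$. Since the action of $\Gamma$ on $\PP_1(\Q_p)$ is compatible with its action on $\T$, one has $\chi_{U(e)}(\gamma x)=\chi_{U(\gamma^{-1}e)}(x)$; unfolding the definition of the $*$-action therefore gives
\[(\varphi*\gamma)(x)=\frac{(cx+d)^{k-2}}{\det(\gamma)^{(k-2)/2}}P(\gamma x)\,\chi_{U(\gamma^{-1}e)}(x)=(P\cdot\gamma)(x)\cdot\chi_{U(\gamma^{-1}e)}(x),\]
and hence $\mu_f(\varphi*\gamma)=c_f(\gamma^{-1}e)(P\cdot\gamma)$. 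Applying the $\Gamma$-equivariance $c_f(\gamma^{-1}e)=\gamma^{-1}\cdot c_f(e)$ together with the defining formula $(\beta\cdot\psi)(Q)=\psi(Q\cdot\beta)$ of the dual action collapses the right-hand side to $c_f(e)((P\cdot\gamma)\cdot\gamma^{-1})=c_f(e)(P)=\mu_f(\varphi)$.

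The computation is essentially mechanical; the one point that deserves mention is the passage from elementary functions $P\cdot\chi_{U(e)}$ to the whole of $\mathcal{A}_k$. This is not a genuine obstacle, because the $*$-action of $\Gamma$ preserves the linear span of such elementary functions and $\mu_f$ is by construction continuous for the topology in which this span is dense, so equivariance on generators propagates to $\mathcal{A}_k$. In sum, both properties in the lemma are direct translations of the two defining axioms for a $\Gamma$-equivariant harmonic cocycle.
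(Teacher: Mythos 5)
Your proof is correct and follows essentially the same route as the paper's: part (1) uses the decomposition of $\PP_1(\Q_p)\cong\E_\infty(\T)$ into the sets $U(e)$ with source a fixed vertex $v$ together with harmonicity, and part (2) reduces to elementary functions $P\cdot\chi_{U(e)}$, computes $(\varphi*\gamma)=(P\cdot\gamma)\cdot\chi_{U(\gamma^{-1}e)}$, and concludes via the $\Gamma$-equivariance of $c_f$ and the definition of the dual action. Your added remark on why the verification on elementary functions suffices (density and continuity of $\mu_f$) is a point the paper leaves implicit.
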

\begin{proof}
\begin{enumerate}
\item Let $v$ be any vertex of $\T$. By identifying $\PP_1(\Q_p)$ with $\E_\infty(\T)$ one has
$$\int_{\PP_1(\Q_p)}P(x)d\mu_f(x)=\sum_{e,\ \mathrm{source}(e)=v}\int_{U(e)}P(x)d\mu_f(x)=\sum_{e,\ \mathrm{source}(e)=v}c_f(e)(P)=0,$$
where in the last equality we have used the harmonicity of $c_f$.\\
\item It suffices to check the formula on functions of the type $P\cdot\chi_{U(e)}$:
\begin{equation*}
\begin{split}
\mu_f((P\cdot\chi_{U(e)})*\gamma) & = \mu_f((P\cdot\gamma)(x)\cdot\chi_{U(e)}(\gamma x))=\mu_f((P\cdot\gamma)\cdot\chi_{U(\gamma^{-1}e)})\\
& = c_f(\gamma^{-1}e)(P\cdot\gamma)=\gamma^{-1}\cdot c_f(e)(P\cdot\gamma)=c_f(e)(P)=\mu_f(P\cdot\chi_{U(e)}).\\
\end{split}
\end{equation*}
\end{enumerate}
\end{proof}

\subsection{The measure $\mu_{f,\Psi ,\star}$ on $G$ associated to $\mu_f$ and $(\Psi,\star )$}
Now we want to pass from a measure on $\PP_1(\Q_p)$ to a measure on the Galois group of a suitable extension of $K$ related to the adjective anticyclotomic that appears in the title. Recall that our aim is to define a $p$-adic $L$-function associated to a pair $(\phi,K)$ where $\phi$ is an eigenform of even weight $k\geq2$ on $\Gamma_0(N)$ and $K$ is an imaginary quadratic field satisfying certain conditions. Up to now, we have used information coming only from $\phi$ (except for the factorization of $N$ which depends on $K$); from now on we will also take into consideration the field $K$.

Let $K_n$ denote the ring class field of $K$ of conductor $p^n$, and set $K_\infty=\bigcup_n K_n$. It is the maximal anticyclotomic extension which is unramified outside $p$. We have the following tower of extensions
$$\Q\subset K\subset H_p\subseteq K_0\subset K_1\subset \cdots\subset K_n\subset \cdots K_\infty,$$
where $K_0=H$ is clearly the Hilbert class field of $K$ and $H_p$ is the maximal subextension of $K_0$ over $K$ in which the prime of $K$ above $p$ splits completely. Note that if $p\cO_K=\p^2$ and $\p$ is a principal ideal then $H_p=K_0$.

We denote
$$G:=\Gal(K_\infty/H_p),\qquad \Delta :=\Gal(H_p/K),\qquad \tilde G:=\Gal(K_\infty/K).$$
We can interpret these groups in terms of ideles of $K$.
Denote by $\cO$ the $\Z[1/p]$-order $\cO_K[1/p]$ in $K$. For every rational prime $\ell$, set $K_\ell:=K\otimes\Q_\ell$, $\cO_\ell:=\cO\otimes\Z_\ell$ and $\hat\cO:=\cO\otimes\hat\Z$. Observe that $\hat\cO^\times$ is isomorphic to $\prod_\ell \cO_\ell^\times$, the product being taken over all primes $\ell$; in what follows it will also appear $\hat\cO ':=\prod_{\ell\neq p}\cO_{\ell}^\times$.

The adel ring of $K$ is $\hat K=\hat\cO\otimes\Q$. Note that, since we are assuming $p$ ramified in $K$, the $\Q_p$-algebra $K_\p$ is the completion of $K$ at the prime $\p$ above $p$ and it is a totally ramified extension of $\Q_p$ of degree $2$. 

If we denote by $\cO_{K,p^n}$ the order of $\cO_K$ of conductor $p^n$, then by definition
$$\Gal(K_n/K)=\Pic(\cO_{K,p^n})=\hat K^\times/K^\times\hat\cO_{K,p^n}^\times.$$
By Galois theory, one has
\begin{equation*}
\tilde G:=\Gal(K_\infty/K)=\varprojlim_n\Gal(K_n/K)=\hat K^\times/K^\times \bigcap_n \hat\cO_{K,p^n}^\times=\hat K^\times/K^\times\hat\cO '\Z_p^\times.
\end{equation*}
Observe that, by strong approximation, we can write
$$\hat\Q^\times=\Q^\times\prod_{\ell\neq p}\Z_\ell^\times\Q_p^\times,$$
therefore, since $\Q\subset K$ and $\prod_{\ell\neq p}\Z_\ell\subset \hat\cO '$, one has
$$\tilde G=\hat K^\times/K^\times \hat\cO'\hat \Q^\times.$$
Consider now the Galois group of $H_p$ over $K$. It is a quotient of $\Gal(K_0/K)=\hat K^\times/K^\times \hat\cO_K^\times$. In particular, since the prime of $K$ above $p$ splits completely in $H_p$, we have
\begin{equation*}
\Gal(H_p/K) =\hat K^\times/K^\times\hat\cO_K^\times K_\p^\times=\hat K^\times/K^\times\hat\cO^\times \Q_p^\times
\end{equation*}
and, again by strong approximation, one can write
$$\Delta :=\Gal(H_p/K)=\hat K^\times/K^\times\hat\cO^\times \hat\Q^\times$$
Finally, by Galois theory, $\Gal(K_\infty/H_p)$ is the unique subgroup $G$ of $\tilde G$ such that $\tilde G/G\simeq \Delta $, therefore
$$G:=\Gal(K_\infty/H_p)=K_\p^\times/\Q_p^\times \cO^\times.$$
For our purpose, we have to find a way to \textquotedblleft translate\textquotedblright the measure $\mu_f$ on $\PP_1(\Q_p)$ into a measure on $G=K_\p^\times/\Q_p^\times \cO^\times$. The trick is to embed $K_\p^\times/\Q_p^\times$ into a matrix algebra acting simply transitively on $\PP_1(\Q_p)$, obtaining then a bijection between $K_\p^\times/\Q_p^\times$ and $\PP_1(\Q_p)$. Finally a simple passage to a quotient will give the desired measure on $G$.

Equip both $\cO$ and $R$ with orientations, that is to say fix surjective homomorphisms
$$\mathfrak{o}\colon R\longrightarrow (\Z/N^+\Z)\times\prod_{\ell\mid N^-}\F_{\ell^2},$$
$$\alpha \colon \cO\longrightarrow (\Z/N^+\Z)\times\prod_{\ell\mid N^-}\F_{\ell^2}.$$
An embedding $\Psi \colon K\longrightarrow B$ is called an oriented optimal embedding of $\cO$ into $R$ if:
\begin{enumerate}
\item $\Psi(K)\cap R=\Psi(\cO)$, so that $\Psi$ induces an embedding of $\cO$ into $R$;
\item $\Psi$ is compatible with the chosen orientations on $\cO$ and $R$ in the sense that the following diagram commutes
\begin{diagram}[size=2em,nohug,labelstyle=\scriptstyle ]
\cO &      &\rTo^{\Psi} &       & R\\
    &\rdTo^\alpha &            & \ldTo^{\mathfrak{o}} & \\
    &      &(\Z/N^+\Z)\times\prod_{\ell\mid N^-}\F_{\ell^2}&  & \\
\end{diagram}
\end{enumerate}
Note that each embedding $\Psi$ corresponds to a vertex of $\T$. In fact $\Psi$ extends to an embedding of $K_\p$ in $B_p$ and, if $\cO_\p$ is the ring of the integers of $K_\p$, $\Psi(\cO_\p)$ is contained in a unique maximal order of $B_p$, which is a vertex of $\T$ by definition.

The group $R^\times$ acts by conjugation on the set of all oriented optimal embeddings of $\cO$ into $R$.
Write $\mathrm{emb}(\cO,R)$ for the set of all oriented optimal embedding of $\cO$ into $R$, taken modulo conjugation by $R_1^\times$.

The group $\Delta $ acts naturally on $\mathrm{emb}(\cO,R)$ in the following way. Let $\Psi\in\mathrm{emb}(\cO,R)$ and $\sigma \in \hat K^\times$ be a representative for $\bar \sigma \in \Delta=\hat K^\times/K^\times\hat\cO^\times \hat\Q^\times$, denote by $\hat\Psi$ the extension of $\Psi\colon K\rightarrow B$ to $\hat K\rightarrow \hat B$ and by $\hat R$ the adelisalition of $R$, that is $R\otimes\hat\Z$. Then $R_\sigma:=(\hat\Psi(\sigma)\hat R\hat\Psi(\sigma)^{-1})\cap B$ is an Eichler $\Z[1/p]$-order of level $N^+$, which inherits an orientation from the one of $R$. The element $\Psi\in\mathrm{emb}(\cO,R)$ is also an element of $\mathrm{emb}(\cO,R_\sigma)$; in fact $\Psi(\cO)\subset B$ and $\Psi(\cO)\subset(\hat\Psi(\sigma)\hat R\hat\Psi(\sigma)^{-1})$, so that $\Psi(\cO)\subset R_\sigma$, moreover $\Psi(K)\cap (\hat\Psi(\sigma)\hat R\hat\Psi(\sigma)^{-1})\subset \Psi(\cO)$ and $\Psi$ also respect the orientations on $\cO$ and $R_\sigma$. Since $B$ and $\Z[1/p]$ satisfy the Eichler condition, all the $\Z[1/p]$-order of a given level are conjugate in $B$, so that there exists $b\in B^\times$ such that $R=bR_\sigma b^{-1}$. Then we can define
$$\bar\sigma \Psi:=b\Psi b^{-1}\ \in\mathrm{emb}(\cO,R).$$
It is easy to see that the definition does not depend on the choice of the representative for $\bar \sigma$, so that it is really an action of $\Delta$. Moreover it is invariant under the action of $R_1^\times$ on $\Psi$, that is to say the action is well-defined on $\mathrm{emb}(\cO,R)$.

The following holds
\begin{proposition}
The group $\Delta$ acts freely on $\mathrm{emb}(\cO,R)$. The set $\mathrm{emb}(\cO,R)/\Delta$ has order $2$.
\end{proposition}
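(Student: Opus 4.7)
The plan has two parts, mirroring the two claims of the proposition.

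For freeness, assume $\bar\sigma\cdot\Psi=\Psi$ in $\mathrm{emb}(\cO,R)$. Unwinding the definition, there exist $b\in B^\times$ with $R=bR_\sigma b^{-1}$ and $\gamma\in R_1^\times$ with $b\Psi b^{-1}=\gamma\Psi\gamma^{-1}$. Because $\Psi(K)\subset B$ is a maximal commutative $\Q$-subalgebra, its centralizer in $B^\times$ is $\Psi(K^\times)$, so $\gamma^{-1}b\in\Psi(K^\times)$; write $b=\gamma\Psi(\kappa)$ with $\kappa\in K^\times$. Substituting into $R=bR_\sigma b^{-1}$ and using $\gamma R\gamma^{-1}=R$ yields $\Psi(\kappa)R_\sigma\Psi(\kappa)^{-1}=R$, equivalently $\hat\Psi(\kappa\sigma)$ normalizes $\hat R$ in an orientation-preserving way. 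A prime-by-prime analysis identifies this normalizer with $\hat R^\times\hat\Q^\times$: at $\ell\nmid pN$ the maximal order has normalizer $R_\ell^\times\Q_\ell^\times$; at $\ell\mid N^+$ and $\ell\mid N^-$ the orientation compatibility discards the Atkin-Lehner involutions; at $p$ no constraint is imposed since $R_p=B_p$. Pulling back via optimality, one deduces $\kappa\sigma\in\hat\cO^\times\hat\Q^\times$ and hence $\sigma\in K^\times\hat\cO^\times\hat\Q^\times$, i.e.\ $\bar\sigma=1\in\Delta$.

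For the order-two claim, I would count $|\mathrm{emb}(\cO,R)|$ directly and compare with $|\Delta|$. Because $B_p\simeq\M_2(\Q_p)$ is split, strong approximation for $B_1^\times$ at $\{p,\infty\}$ applies: the $\Z[1/p]$-Eichler order $R$ is unique up to $B^\times$-conjugation, and $\mathrm{emb}(\cO,R)$ admits an adelic description as an appropriate double coset of $\hat B^\times$ involving $\hat\Psi_0(\hat K^\times)$ on the left and $\hat R^\times$ (with orientation) on the right. Tracking the reduced norm together with the class-field-theoretic identification $\Delta\cong\Pic(\cO_K[1/p])$, one expects $|\mathrm{emb}(\cO,R)|=2|\Delta|$, so that the free $\Delta$-action yields a quotient of order $2$. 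The extra factor of $2$ reflects the index-$2$ image of the local norm $K_\p^\times\to\Q_p^\times$ at the ramified prime (by local class field theory), equivalently the involution $\Psi\mapsto\Psi\circ c$ on $\mathrm{emb}(\cO,R)$ induced by the nontrivial element of $\Gal(K/\Q)$, which one checks is not realized inside $\Delta$ by an argument analogous to the freeness step.

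The main obstacle is the cardinality count: making the adelic identification precise requires careful bookkeeping of the orientation conditions at primes dividing $N$ and of the interaction between the reduced norm on $R^\times$ and the field norm on $\cO^\times$. In particular, the argument must be handled uniformly in the two cases governed by whether $\p$ is principal in $\cO_K$ or not — the former corresponding, as noted in the text, to $H_p=H$, and affecting the relation among $\Pic(\cO_K)$, $\Pic(\cO)$ and $\Delta$.
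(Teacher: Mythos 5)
Your freeness argument is sound and is essentially the standard one (the paper itself offers no proof, only a pointer to Gross): the centralizer step reduces everything to computing the orientation-preserving normalizer of $\hat R$ intersected with $\hat\Psi(\hat K^\times)$ place by place, and your local analysis correctly identifies this with $\hat\cO^\times\hat\Q^\times$, which is exactly what is needed since $\Delta=\hat K^\times/K^\times\hat\cO^\times\hat\Q^\times$.

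The counting step, however, contains a genuine error. First, the involution $\Psi\mapsto\Psi\circ c$ you invoke does not act on $\mathrm{emb}(\cO,R)$ at all: $c$ induces the Frobenius on $\cO_K/\ell\cO_K\simeq\F_{\ell^2}$ at each inert prime $\ell\mid N^-$, so $\Psi\circ c$ violates the fixed orientation at those primes (and $N^-$ has an odd number of prime factors, hence is $>1$). Second, and more seriously, the uniform count $|\mathrm{emb}(\cO,R)|=2|\Delta|$ is not correct. The adelic description you set up gives a surjection from $\mathrm{emb}(\cO,R)$ onto the set of oriented optimal embeddings modulo the \emph{full} unit group $R^\times$, and the latter is a torsor under $\Pic(\cO)=\Delta$; the fibre over a class $[\Psi]$ is $R^\times/R_1^\times\Psi(\cO^\times)$, which by taking reduced norms has order $[\Norm(R^\times):N_{K/\Q}(\cO^\times)]=[p^{\Z}:N_{K/\Q}(\cO^\times)]$. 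This index is $2$ when $\p$ is \emph{not} principal (then $\cO^\times=\{\pm1\}\cdot p^{\Z}$ and $N_{K/\Q}(\cO^\times)=p^{2\Z}$), but it is $1$ when $\p=(\pi)$ is principal, because then $\Psi(\pi)\in R^\times$ has reduced norm $p$ and the odd-valuation coset of $R^\times/R_1^\times$ is absorbed into the stabilizer of $\Psi$. So the second $R_1^\times$-class inside a given $R^\times$-class — when it exists — is produced by conjugation by an element $w_p\in R^\times$ of odd reduced-norm valuation (the Atkin--Lehner element at $p$ appearing later in the paper as $\tilde\Gamma_0/\Gamma_0$), not by the local norm index of $K_\p/\Q_p$ or by $\Gal(K/\Q)$. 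In particular your claim that the two cases ($\p$ principal or not, i.e.\ $H_p=H$ or $[H:H_p]=2$) can be "handled uniformly" to yield $2|\Delta|$ is exactly where the argument breaks: in the principal case the same computation yields $|\mathrm{emb}(\cO,R)|=|\Delta|$ and a single orbit. You should either restrict to the non-principal case or explain why the statement survives when $\p$ is principal; as it stands, the counting half of your proof does not go through.
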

\begin{proof}
See \cite[Section 3]{Gross-Special-Values}.
\end{proof}
A pointed oriented optimal embedding of $\cO$ into $R$ is a pair $(\Psi,\star )$, where $\Psi$ is an oriented optimal embedding of $\cO$ into $R$ and $\star$ is a point of $\PP_1(\Q_p)$, which is not fixed by the action of $\Psi(K_\p^\times)$ by Moebius transformations. Since $R_1^\times$ acts on the oriented optimal embedding by conjugation and on $\PP_1(\Q_p)$ by Moebius transformations, we can consider the set of all pointed oriented optimal embeddings of $\cO$ into $R$ modulo $R_1^\times$, that we will denote by $\mathrm{emb}_\mathrm{p}(\cO,R)$. The elements of $\mathrm{emb}_\mathrm{p}(\cO,R)$ have an interpretation on the Bruhat-Tits tree: once fixed $\Psi\in\mathrm{emb}(\cO,R)$ which is a vertex $v$ on the quotient graph $\T/\Gamma$, to give $\star$ is equivalent to give an end of $\T/\Gamma$ originating from $v$.

The group $\tilde G$ acts on $\mathrm{emb}_\mathrm{p}(\cO,R)$. In particular, for each $n\geq 0$ the subgroup $G_n=\Gal(K_\infty/K_n)$ fixes $\Psi$ and the first $n+1$ edges of the end $\star$, so that $\sigma\in G_n$ sends $(\Psi,\star)$ to $(\Psi,\sigma\star)$ where $\sigma\star$ is an end which differs from $\star$ as from the ($n+2$)-th edge. While, as seen before, the quotient $\Delta$ moves the starting vertex $\Psi$.

As before, it holds
\begin{proposition}\label{action_tildeG}
The group $\tilde G$ acts freely on $\mathrm{emb}_\mathrm{p}(\cO,R)$. The set $\mathrm{emb}_\mathrm{p}(\cO,R)/\tilde G$ has order $2$.
\end{proposition}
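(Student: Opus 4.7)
The plan is to reduce the statement to the preceding proposition via the forgetful map
$\pi\colon \mathrm{emb}_\mathrm{p}(\cO,R)\to \mathrm{emb}(\cO,R)$, $(\Psi,\star)\mapsto\Psi$,
which is $\tilde G$-equivariant with respect to the quotient $\tilde G/G=\Delta$. Since the preceding proposition gives that $\Delta$ acts freely on $\mathrm{emb}(\cO,R)$ with quotient of order two, it suffices to verify: (i) that $G$ acts simply transitively on every fiber of $\pi$, so that the induced map $\mathrm{emb}_\mathrm{p}(\cO,R)/\tilde G\to\mathrm{emb}(\cO,R)/\Delta$ is a bijection; and (ii) that the $\tilde G$-action is free, which in turn reduces to the freeness of the $G$-action on each fiber.

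Fix $\Psi\in\mathrm{emb}(\cO,R)$ and analyze $\pi^{-1}(\Psi)$. Since $p$ is ramified in $K$, the extension $K_\p/\Q_p$ is a ramified quadratic extension, so the two fixed points of $\Psi(K_\p^\times)$ acting by Möbius transformations on $\PP_1(\C_p)$ lie in $\PP_1(K_\p)\setminus\PP_1(\Q_p)$; consequently the torus $\Psi(K_\p^\times)/\Q_p^\times$ acts simply transitively on $\PP_1(\Q_p)$. The stabilizer of $\Psi$ in $R_1^\times$ equals $R_1^\times\cap\Psi(K^\times)=\Psi(\{\alpha\in\cO^\times:\mathrm{Nm}_{K/\Q}(\alpha)=1\})=\{\pm 1\}$ under the standing hypothesis $\cO_K^\times=\{\pm 1\}$, and it acts trivially on $\PP_1(\Q_p)$. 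Hence $\pi^{-1}(\Psi)$ is identified with $\PP_1(\Q_p)$ as a $K_\p^\times/\Q_p^\times$-torsor via the torus orbit map.

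I would then identify this torsor structure with the $G$-action by unwinding the adelic definition: for $\sigma\in G$ with lift $\tilde\sigma\in K_\p^\times$, the action on $\star$ is the Möbius transformation $\Psi(\tilde\sigma)\star$ followed by conjugation by the global element $b\in B^\times$ which brings the conjugated Eichler order $R_\sigma=\hat\Psi(\hat\sigma)\hat R\hat\Psi(\hat\sigma)^{-1}\cap B$ back to $R$. Two lifts of $\sigma$ differ by an element of $\Q_p^\times\cO^\times$: the $\Q_p^\times$-factor acts as scalars and hence trivially by Möbius, while for $\alpha\in\cO^\times\subset K^\times$ one takes $b=\Psi(\alpha)^{-1}$, a global element normalizing $R$ (since $\Psi(\alpha)\in R_\ell^\times$ for every $\ell\ne p$) and commuting with $\Psi$, which cancels the Möbius contribution. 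This confirms both that the $G$-action on the fiber is well-defined and that the fiber becomes a torsor under $G=K_\p^\times/\Q_p^\times\cO^\times$; simple transitivity and freeness of $\tilde G$ then follow at once. I expect the most delicate point to be the careful bookkeeping of this global adjustment in the case where $\p$ is principal in $\cO_K$, so that $\cO^\times$ has non-trivial image in $K_\p^\times/\Q_p^\times$ and the identification of the fiber with $G$ as a torsor has to be set up using the Atkin-Lehner-type global element $\Psi(\pi)$ (with $\p=(\pi)$) together with the two $\Delta$-orbits in $\mathrm{emb}(\cO,R)$ to reconcile the two descriptions.
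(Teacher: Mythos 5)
The paper itself does not prove this statement: its ``proof'' is the single citation to \cite[Lemma 2.13]{BDIS}, which is established there under the hypothesis that $p$ is \emph{inert} in $K$. Your strategy --- fibre the forgetful map $\pi\colon\mathrm{emb}_\mathrm{p}(\cO,R)\to\mathrm{emb}(\cO,R)$ over the preceding proposition and show that $G$ acts simply transitively on each fibre --- is the natural one, and your computation of the fibre is correct as far as it goes: the stabilizer of $\Psi$ in $R_1^\times$ is $\Psi(\{\pm1\})$, which acts trivially on $\PP_1(\Q_p)$, so the fibre over the class of $\Psi$ is identified with $\PP_1(\Q_p)$, a torsor under the full torus $K_\p^\times/\Q_p^\times\cong K_{\p,1}^\times$.

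The gap is in the passage from this $K_\p^\times/\Q_p^\times$-torsor structure to a $G$-torsor structure. In the ramified case $G=K_\p^\times/\Q_p^\times\cO^\times$ is a \emph{proper} quotient of $K_\p^\times/\Q_p^\times$: the image of $\cO^\times$ is the order-two subgroup generated by the class of $\pi$ (with $\p=(\pi)$), i.e.\ by $-1\in K_{\p,1}^\times$, and this class acts on the fibre via the Moebius transformation $\iota\Psi(\pi)$, whose fixed points are $z_\Psi,\bar z_\Psi\notin\PP_1(\Q_p)$ --- so it acts freely, \emph{not} trivially. Your proposed repair, cancelling this contribution by conjugating with the global element $b=\Psi(\alpha)^{-1}$, does not go through: $\Psi(\pi)$ has reduced norm $N_{K/\Q}(\pi)=p$, so $b\in R^\times$ but $b\notin R_1^\times$, whereas $\mathrm{emb}_\mathrm{p}(\cO,R)$ is defined modulo conjugation by $R_1^\times$ only; in fact $\iota\Psi(\pi)$ represents the Atkin--Lehner element $w_p\in\tilde\Gamma_0/\Gamma_0$. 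With the definitions as given, the fibre is therefore a double cover of $G$ (consistently, the paper itself invokes $|\ker(K_{\p,1}^\times\twoheadrightarrow G)|=2$ in the proof of Lemma \ref{integr}), and the ``induced $G$-action'' you describe is either ill-defined or has two orbits per fibre; either way, simple transitivity of $G$ on the fibres fails as claimed. You correctly flag the principal-$\p$ case as the delicate one, but you defer it rather than resolve it --- and by the paper's own argument $\cO^\times/\Z[1/p]^\times$ is nontrivial in this setting, so this is not a side case but the heart of the matter. A correct proof must either enlarge the conjugation group in the definition of $\mathrm{emb}_\mathrm{p}(\cO,R)$ so that $w_p$ acts trivially on classes, or else track how the two $G$-orbits in each fibre recombine with the two $\Delta$-orbits of the preceding proposition; neither is done here, and neither is supplied by the inert-case lemma being cited.
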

\begin{proof}
See \cite[Lemma 2.13]{BDIS}.
\end{proof}
Fix a pointed oriented optimal embedding $(\Psi,\star)$. We want to define a measure $\mu_{f,\Psi,\star}$ on $G$.

Observe that, since $p$ is ramified in $K$, the completion $K_\p$ is a quadratic extension of $\Q_p$ so in particular a $\Q_p$-vector space of dimension $2$; it follows that the quotient of $K_\p^\times$ by the action of $\Q_p^\times$ is by definition $\PP_1(\Q_p)$. Since it is also a group, the action of $K_\p^\times/\Q_p^\times$ on itself is simply transitive. Therefore, fixing $\Psi$ we obtain a simply transitive action of $K_\p^\times/\Q_p^\times$ on $\PP_1(\Q_p)$, and the base point $\star$ determines a bijection \[\eta_{\Psi,\star}\colon K_\p^\times/\Q_p^\times\longrightarrow \PP_1(\Q_p)\]
by the rule $\eta_\Psi(x)=\iota\Psi(x^{-1})(\star)$. 
Note that $\iota\Psi(K_\p^\times)$ has two fixed points $z_\Psi,\bar z_\Psi\in K_\p^\times$ in $\HH_p$, which, up to a change of coordinates, are the roots of $X^2-D$.
%\footnote{Non capisco questo: se il polinomio caratteristico e' $X^2-D$, so solo gli autovalori, giusto? come faccio a dire che sono questi due i punti fissi? Mi suona strano, anche perche' in questo modo avremmo sempre 
%e solo due punti fissi per tutti gli $\Psi_i$. In generale, forse indicherei (come in effetti faccio dopo) 
%con $z_\Psi$ e $\bar{z}_\Psi$ questi punti fissi. FATTO}
We can identify $K_\p^\times/\Q_p^\times$ with $K_{\p,1}^\times$, the group of elements in $K_\p^\times$ of norm $1$, via the map sending $x\ (\mathrm{mod}\ \Q_p)$ to $x/\bar x$. If we choose $\star=\infty$ we can write the composition
\[\eta_{\Psi,\star}\colon K_{\p,1}^\times\overset{\sim}\longrightarrow K_\p^\times/\Q_p^\times\overset{\sim}\longrightarrow\PP_1(\Q_p)\]
in a simple explicit way (abusing notation we have denoted also by $\eta_{\Psi,\star}$ the composition of $\eta_{\Psi,\star}$ with $x\mapsto x/\bar x$).

We first look at the inverse $\eta_{\Psi,\infty}^{-1}\colon\PP_1(\Q_p)\longrightarrow K_{\p,1}^\times$.
%\footnote{Perche' a questo punto siamo passati a guardare $\star=\infty$? PER SCRIVERE IN MODO ESPLICITO L'ISOMORFISMO $\eta_{\Psi,\infty}$ (ANZI $\eta_{\Psi,\infty}^{-1}$) CHE POI USIAMO PIU' AVANTI}
Let $a\in\PP_1(\Q_p)$, it is the image of $\star=\infty$ under the Moebius transformation associated to $\iota\Psi(a-z_\Psi)=\iota\Psi\left(\left(\frac{1}{a-z_\Psi}\right)^{-1}\right)$.
%\footnote{Vedere le precedenti due footnotes. Intendi che funziona cosi' se scelgo $\star=\infty$? SI', CIOE' TUTTO DIPENDE DALLA BASE SCELTA PER SCRIVERE LE MATRICI}
So that, applying the identification $x\mapsto x/\bar x$, one obtains
\begin{equation}\label{corr}
\eta_{\Psi,\infty}^{-1}(a)=\frac{a-\bar z_\Psi}{a-z_\Psi}.
\end{equation}
Compute the inverse
$$\frac{a-\bar z_\Psi}{a-z_\Psi}=\alpha,\qquad \alpha\in K_{p,1}^\times\quad\Longrightarrow \quad a=\frac{\alpha z_\Psi-\bar z_\Psi}{\alpha-1}.$$
So that
$$\eta_{\Psi,\infty}(\alpha)=\frac{\alpha z_\Psi-\bar z_\Psi}{\alpha-1}.$$
Pullback by $\eta_{\Psi,\star}$ and $\eta_{\Psi,\star}^{-1}$ on functions preserve local analyticity and so we get a natural, continuous isomorphism
$$(\eta_{\Psi,\star}^{-1})^*\colon \mathcal{A}(K_{p,1}^\times)\longrightarrow \mathcal{A}(\PP_1(\Q_p))$$
between the $\Q_p$-algebra of locally analytic functions on $K_{p,1}^\times$ and the $\Q_p$-algebra of locally analytic functions on $\PP_1(\Q_p)$.

Let $u_\Psi:=\Psi(\sqrt{D})$, it is an element of reduced trace zero, as one can see looking at the characteristic polynomial of $\iota(u_\Psi)$, that is $X^2-D$. Define the polynomial $P_{u_\psi}\in\mathcal{P}_2$ as
$$P_{u_\Psi}(x)=\mathrm{trace}\left(\iota(u_\Psi)
\begin{pmatrix}
x\\ 1
\end{pmatrix}
\begin{pmatrix}
1 & -x\\
\end{pmatrix}
\right).$$ It is a polynomial with coeffcients in $\Q_p$ with the property that
\begin{equation}\label{invar}
P_{u_\Psi}\cdot\iota\Psi(\alpha)=P_{u_\Psi},\qquad\forall\alpha\in K_\p^\times.
\end{equation}
Now recall that in the previous section we have defined a distribution $\mu_f$ on the space of locally analytic functions on $\PP_1(\Q_p)$ having a pole of order at most $k-2$ in $\infty$, therefore, by restriction, also a linear functional on the free $\mathcal{A}(\PP_1(\Q_p))$-module of rank one $P_{u_\Psi}^\frac{k-2}{2}\cdot\mathcal{A}(\PP_1(\Q_p))\subset\mathcal{A}_k$, which is again a distribution (see \cite{Tei}, Proposition 9). 
We use it to define the distribution $\mu_{f,\Psi,\star}$ on $K_{p,1}^\times$ setting
$$\mu_{f,\Psi,\star}(\varphi ):=\mu_f(P_{u_\Psi}^\frac{k-2}{2}\cdot(\eta_{\Psi,\star}^{-1})^*(\varphi)),\qquad\text{for}\ \varphi\in\mathcal{A}(K_{\p,1}^\times).$$
Observe that under the identification $x\mapsto x/\bar x$ we can write $G=K_\p^\times/\Q_p^\times \cO^\times$ as
$$G=K_{p,1}^\times/\cO_1^\times,$$
where $\cO_1^\times$ is the group of norm one elements in $\cO^\times$.
By the theorem of $S$-units, $\cO^\times$ is a free abelian group of rank $1$, so that the quotient $\cO^\times/\Z[1/p]^\times$ is a finite group $\Z/n\Z$ for some $n$. In particular,
$$\left |\cO^\times/\Z[1/p]^\times\right | >1,$$
because, if we write $p$ as $xy$ with $x,y\in\p\subset \cO_K$ (recall that we assumed $p\cO_K=\p^2$) then $\frac{x}{p}\in \cO^\times$ but $\frac{x}{p}\notin\Z[1/p]^\times$. Consider the projection map
\begin{diagram}[l>=4em]
K_{p,1}^\times\simeq K_\p^\times/\Q_p^\times & \rOnto & K_\p^\times/\Q_p^\times\cO^\times\simeq K_{p,1}^\times/\cO_1^\times.
\end{diagram}
Its kernel is $\cO_1^\times\simeq\cO^\times /(\Q_p^\times\cap\cO^\times)=\cO^\times/\Z[1/p]^\times$, which is a finite group. In particular, every element of $\cO_1^\times$ is a root of unity, thus $\cO_1^\times=\{\pm 1\}$ since $K$ is an imaginary quadratic field, and as such it does not contain any cyclotomic field $\Q(\zeta_n)$ for $n\neq 2$.
This implies that the kernel of the projection $K_{p,1}^\times \twoheadrightarrow K_{p,1}^\times/\cO_1^\times=G$ has order $2$.

Now, to give a function $\varphi $ on $G=K_{p,1}^\times/\cO_1^\times $ is equivalent to give a function $\varphi$ on $K_{p,1}^\times$ such that $\varphi(x)=\varphi(ux)$
%\footnote{Lascio $u$ oppure metto $-1$ che e' l'unico elemento non banale di $\cO_1^\times$?} 
for all $x\in K_{p,1}^\times$ and $u\in\cO_1^\times$. Observe that $\mu_{f,\Psi,\star}$ is invariant under translation by $\cO_1^\times$. Indeed, if we denote by $\varphi\times u$ the function $x\longmapsto \varphi(ux)$ ($x\in K_{p,1}^\times$) then
$$\mu_{f,\Psi,\star}(\varphi\times u)=\mu_f((P_{u_\Psi}^\frac{k-2}{2}\cdot(\eta_{\Psi,\star}^{-1})^*(\varphi\times u))=\mu_f((P_{u_\Psi}^\frac{k-2}{2}\cdot(\varphi\times u)\circ \eta_{\Psi,\star}^{-1}),$$
but for $x\in\PP_1(\Q_p)$
$$(P_{u_\Psi}^\frac{k-2}{2}\cdot(\varphi\times u)\circ \eta_{\Psi,\star}^{-1})(x)=P_{u_\Psi}^\frac{k-2}{2}(x)\cdot\varphi(u \eta_{\Psi,\star}^{-1}(x))=P_{u_\Psi}^\frac{k-2}{2}(x)\cdot\varphi(\eta_{\Psi,\star}^{-1}(\gamma x))$$
where $\gamma\in\iota(\Psi(\cO_1^\times))\subset \Gamma$ is the matrix of $\Gamma$ which correspond to $u\in\cO_1^\times$. Using \eqref{invar} and that $c_f\in C_\mathrm{har}^\Gamma$ is $\Gamma$-invariant we obtain that
$$\mu_f(P_{u_\Psi}^\frac{k-2}{2}\cdot(\varphi\times u)\circ \eta_{\Psi,\star}^{-1})=\mu_f((P_{u_\Psi}^\frac{k-2}{2}\cdot(\eta_{\Psi,\star}^{-1})^*\varphi)*\gamma)=\mu_f(P_{u_\Psi}^\frac{k-2}{2}\cdot(\eta_{\Psi,\star}^{-1})^*\varphi)=\mu_{f,\Psi,\star}(\varphi).$$
The invariance of $\mu_{f,\Psi,\star}$ under translation by $\cO_1^\times$ implies that, if $\varphi\in\mathcal{A}(K_{p,1}^\times)$ is a locally analytic function on $K_{p,1}^\times$ and $V$ is a compact open subset of $K_{p,1}^\times$, then
$$\int_{uV}\varphi d\mu_{f,\Psi,\star}=\int_V (\varphi\times u) d\mu_{f,\Psi,\star},\quad \forall u\in\cO_1^\times$$
just for a change of variable. In particular if $\varphi\in\mathcal{A}(G)$ is a locally analytic function on $G=K_{p,1}^\times/\cO_1^\times$ and $\mathcal{F}$ is a fundamental domain for the action of $\cO_1^\times$ on $K_{p,1}^\times$ then
$$\int_{u\mathcal{F}}\varphi d\mu_{f,\Psi,\star}=\int_\mathcal{F}\varphi d\mu_{f,\Psi,\star}.$$
It follows that we can define a measure, denoted also by $\mu_{f,\Psi,\star}$, on $G$ setting
$$\mu_{f,\Psi,\star}(\varphi)=\int_G\varphi d\mu_{f,\Psi,\star}:=\int_\mathcal{F}\varphi d\mu_{f,\Psi,\star},\quad \text{for all}\ \varphi\in\mathcal{A}(G).$$

\subsection{Measure $\mu_{f,K}$ on $\tilde G$ associated to $\phi$ and $K$}
Extend $\mu_{f,\Psi,\star}$ to a $\C_p$-valued measure $\mu_{f,K}$ on $\tilde G$ by the rule
$$\mu_{f,K}(\delta U):=\mu_{f,\Psi^{\delta^{-1}},\star^{\delta^{-1}}}(U),\qquad U\subset G,\ \delta\in\tilde G.$$
For $\bar\delta\in\Delta$, choose a lift $\delta$ of $\bar\delta$ to $\tilde G$, so that $\tilde G$ is a disjoint union of $G$-cosets:
$$\tilde G=\bigcup_{\bar\delta\in\Delta}\delta G.$$
If $\varphi\in\mathcal{A}(\tilde G)$ is any locally analytic function on $\tilde G$, then
$$\int_{\tilde G}\varphi(x) d\mu_{f,K}(x)=\sum_{\bar\delta\in\Delta}\int_G \varphi(\delta x) d\mu_{f,\Psi^{\delta^{-1}},\star^{\delta^{-1}}}(x).$$
\begin{proposition}\label{dep_on_tildeG}
The distribution $\mu_{f,K}$ depends on the choice of $(\Psi,\star)$ only up to translation by an element of $\tilde G$, and up to sign. Its restriction to $G$ is $\mu_{f,\Psi,\star}$.
\end{proposition}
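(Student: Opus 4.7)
The plan is to prove the two assertions separately. The restriction statement is immediate: setting $\delta=1$ in the defining rule $\mu_{f,K}(\delta U):=\mu_{f,\Psi^{\delta^{-1}},\star^{\delta^{-1}}}(U)$ gives $\mu_{f,K}(U)=\mu_{f,\Psi,\star}(U)$ for every open compact $U\subset G$. The substantive content is therefore the dependence on the pointed embedding.

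Let $(\Psi',\star')$ be another pointed oriented optimal embedding and $\mu'_{f,K}$ the corresponding measure on $\tilde G$. By Proposition \ref{action_tildeG}, $\mathrm{emb}_\mathrm{p}(\cO,R)/\tilde G$ has exactly two elements, so I split into two cases. First, if $(\Psi',\star')$ lies in the $\tilde G$-orbit of $(\Psi,\star)$, write $(\Psi',\star')=\tau\cdot(\Psi,\star)$ with $\tau\in\tilde G$ unique by freeness of the action. A direct calculation from the defining rule, using the composition law of the $\tilde G$-action on pointed embeddings, yields
\[\mu'_{f,K}(\delta U)=\mu_{f,\Psi^{\tau\delta^{-1}},\star^{\tau\delta^{-1}}}(U)=\mu_{f,K}(\delta\tau^{-1}U),\]
showing that $\mu'_{f,K}$ is the right-translate of $\mu_{f,K}$ by $\tau^{-1}$, with no sign change.

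Second, if $(\Psi',\star')$ lies in the complementary $\tilde G$-orbit, I will first pass to a distinguished representative of that orbit obtained by reversing the orientation of the initial edge of the end $\star$---equivalently, by applying the Atkin-Lehner involution $w_p$ at the vertex corresponding to $\Psi$. The harmonicity relation $c_f(\bar e)=-c_f(e)$ then produces a sign change in $\mu_f$ along the ends affected by this reversal, while the polynomial $P_{u_\Psi}$ is unaffected by virtue of the invariance \eqref{invar}, and the map $\eta_{\Psi,\star}$ changes only through the selection of starting edge. Propagating these effects through the construction of $\mu_{f,\Psi,\star}$ and then of $\mu_{f,K}$ gives $\mu'_{f,K}=-\mu_{f,K}$ for this distinguished choice. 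A general representative of the second orbit differs from the distinguished one by an element of $\tilde G$, reducing to the same-orbit case and exhibiting $\mu'_{f,K}$ as a $\tilde G$-translate of $-\mu_{f,K}$.

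The main obstacle I anticipate is the cross-orbit sign computation: one has to be careful that the orientation flip coming from $c_f(\bar e)=-c_f(e)$ is not absorbed or doubled by the accompanying change in the chart $\eta_{\Psi,\star}$, the polynomial $P_{u_\Psi}$, or the choice of fundamental domain for $\cO_1^\times$ acting on $K_{\p,1}^\times$. Pinning down the precise identification of the two $\tilde G$-orbits on $\mathrm{emb}_\mathrm{p}(\cO,R)$ with the two possible orientations at $p$, and giving a clean geometric description of how $w_p$ acts on a pointed embedding, is the crux of the argument.
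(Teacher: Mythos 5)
Your overall structure --- reduce to the two $\tilde G$-orbits of Proposition \ref{action_tildeG}, handle the same-orbit case by a translation computation, and isolate the cross-orbit case as the source of the sign --- is exactly the paper's (very terse) argument, and your treatment of the restriction to $G$ and of the same-orbit case is fine. One preliminary point you skip and the paper does not: before invoking the two-orbit count you must check that $\mu_{f,\Psi,\star}$ depends only on the class of $(\Psi,\star)$ in $\mathrm{emb}_\mathrm{p}(\cO,R)$, i.e.\ is unchanged under $(\Psi,\star)\mapsto(\gamma\Psi\gamma^{-1},\gamma\star)$ for $\gamma\in R_1^\times$; this follows from the $\Gamma$-invariance of $\mu_f$ and is the first line of the paper's proof.

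The genuine gap is in your cross-orbit mechanism. The operation ``reverse the orientation of the initial edge of the end $\star$'' is not well defined on pointed embeddings: an end is an equivalence class of paths and the point $\star\in\PP_1(\Q_p)$ it determines is unchanged by any such local modification, so this does not produce a representative of the other orbit, and consequently the harmonicity relation $c_f(\bar e)=-c_f(e)$ is not where the sign comes from. The two $\tilde G$-orbits are in fact distinguished by the component at $p$: since $p$ is ramified in $K$, a uniformizer $\varpi$ of $K_\p$ has odd reduced-norm valuation, its class generates the order-two group $K_\p^\times/\Q_p^\times\cO_{K,\p}^\times$ which is killed in $\tilde G=\hat K^\times/K^\times\hat\cO'\hat\Q^\times$, and acting by $\varpi$ interchanges the two orbits. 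Transporting the construction along this action amounts to conjugating by an element of $\iota(R^\times)$ of odd valuation, i.e.\ by the Atkin--Lehner element $w_p\in\tilde\Gamma_0\setminus\Gamma_0$; the distribution $\mu_f$ is an eigenvector for $w_p$ with eigenvalue $\pm1$ (the paper uses precisely this fact later, in the remark following the uniformization diagram, where it states that $w_p$ acts on $\mu_f$ via $a_p$), and \emph{that} eigenvalue is the ``up to sign'' in the statement. So your conclusion is correct, but the step producing the sign needs to be rerouted through the $w_p$-eigenvalue of $f$ rather than through edge-orientation reversal.
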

\begin{proof}
If $(\Psi,\star)$ is replaced by $(\gamma\Psi\gamma^{-1},\gamma\star)$ with $\gamma\in R_1^\times$, then the associated distribution is unchanged since $\mu_f$ is invariant under the action of $\Gamma$. By Proposition \ref{action_tildeG} there are only two $\tilde G$-orbits. If $(\Psi,\star)$ and $(\Psi',\star')=\alpha\cdot (\Psi,\star)$ are in the same $\tilde G$-orbits, the associated distributions differ by translation by $\alpha$. If $(\Psi,\star)$ and $(\Psi',\star')$ belong to different $\tilde G$-orbits, the associated distributions differ by translation by an element of $\tilde G$ and, eventually, by the sign.
\end{proof}

\subsection{The $p$-adic $L$-function associated to $\phi$ and $K$}

We are now ready to define the notion of $p$-adic $L$-function attached to $\phi$ and $K$.  
\begin{definition}Let 
\[\mathcal{X}=\Hom_\mathrm{cont}(\tilde{G},\bar\Q_p^\times)\] 
be the set of $\bar\Q_p^\times$-valued 
continuous characters $\chi:\tilde{G}\rightarrow\bar\Q_p^\times$. 
\begin{enumerate}
\item Let $(\Psi,\star)$ be a representative for a class in $\mathrm{emb}_\mathrm{p}(\cO,R)$. The 
\emph{partial anticyclotomic $p$-adic $L$-function} attached to $\phi$, $K$ and $(\Psi,\star)$ is the function 
$\chi\mapsto L_p(\phi,K,\Psi,\star,\chi)$ defined for $\chi\in\mathcal{X}$ as 
\[L_p(\phi,K,\Psi,\star,\chi):=\int_G \chi(g)d\mu_{f,\Psi,\star}(g).\]

\item The \emph{anticyclotomic $p$-adic $L$-function} attached to $\phi$ and $K$ is the function
$\chi\mapsto L_p(\phi,\chi)$ defined for $\chi\in\mathcal{X}$ by   
\[L_p(\phi,K,\chi):=\int_{\tilde G} \chi(g)d\mu_{f,K}(g).\]
\end{enumerate}
\end{definition}
Let $\mathrm{log}\colon \C_p^\times\longrightarrow \C_p$ be a branch of the $p$-adic logarithm. It gives a homomorphism $\mathrm{log}\colon K_\p^\times\longrightarrow K_\p$ which is $0$ on $\cO_1^\times$, and hence, by passing to the quotient, a homomorphism from $G$ to $K_\p$ which extends uniquely to a homomorphism\[\mathrm{log}\colon \tilde G\longrightarrow K_\p\] as $G$ has finite order in $\tilde G$.
For $g\in\tilde G$ and $s\in\Z_p$, define $$g^s:=\mathrm{exp}(s\cdot\mathrm{log}(g)),$$ where $\mathrm{exp}$ is the usual $p$-adic exponential.
For any finite order character $\chi:\tilde{G}\rightarrow\bar\Q_p$, let $\chi_s$ denote the 
character defined by $\chi_s(g)=\chi(g)\cdot g^s$. In this case, we put 
\[L_p(\phi,K,\chi,s)=L_p(\phi,K,\chi_{s-\frac{k}{2}})\quad\text{ and }\quad L_p(\phi,K,\Psi,\star,\chi,s)=L_p(\phi,K,\Psi,\star,\chi_{s-\frac{k}{2}}).\]
For $\chi$ a finite order character of $\tilde{G}$, we define $L_p'(\phi,K,\chi,s)$ and $L_p'(\phi,K,\Psi,\star,\chi,s)$ to be the $p$-adic derivative 
of the $p$-adic analytic functions $s\mapsto L_p(\phi,K,\chi,s)$ and 
$s\mapsto L_p(\phi,K,\Psi,\star,\chi,s)$, respectively. 

%Note that, since the cocycle $c_f$ takes values in an extension at most quadratic of the field generated by the Fourier coefficients of $\phi$, the $p$-adic distribution $\mu_{f,K}$ can also be viewed as a complex-valued distribution, against which locally constant $\C$-valued functions can be integrated. In particular, let $\chi \colon \tilde G\longrightarrow \C$ be a ramified finite order character of $\tilde G$. By Proposition \ref{dep_on_tildeG}, the integral $\int_{\tilde G}\chi(g) d\mu_{f,K}$ depends on the choice of $(\Psi,\star)$ only up to multiplication by a root of unity, so that its complex absolute value is well defined.

\subsection{Interpolation formulae} 
We now fix an embedding $\bar\Q_p\hookrightarrow \C$, thus allowing to consider $p$-adic 
numbers as complex numbers. 
Then we expect that the $p$-adic $L$-function satisfies the following interpolation formula: 
\begin{equation}\label{interp}
\left |\int_{\tilde G}\chi(g) d\mu_{f,K}\right | ^2=|L_p(\phi,K,\chi,k/2)|^2\doteq L^{\mathrm{alg}}(\phi,K,\chi,k/2)
\end{equation}
for any finite order character $\chi$ of $\tilde{G}$, where $\doteq$ indicates an equality up to an algebraic non-zero factor and $L^{\mathrm{alg}}(\phi,K,\chi,k/2)$ is a suitable normalisation of $L(E/K,1)$, obtained by dividing $L(\phi,K,k/2)$ by an appropriate complex period. For $k=2$, formula \eqref{interp} has been proved in \cite[Theorem 1.4.2]{YZZ}. 
%.e. the $p$-adic $L$-function attached to $E$ and $K$ twisted by $\chi$ interpolates the complex $L$-function of $E$ and $K$ twisted by $\chi$ in the central point $s=1$.

\section{Twisted Cerednik-Drinfeld uniformization and Heegner points} 
The aim of this Section is to review a twisted version of the Cerednik-Drinfeld theorem which we will use for our purposes. 
As a general notation, if $T\rightarrow S$ and $X\rightarrow S$ are schemes over a base scheme $S$, we denote $X_T=X\times_ST$. If $F/\Q$ is a field extension, and $X$ is a scheme over $\Spec(\Q)$, 
we will write $X_{F}$ for $X_{\Spec(F)}$ and sometimes we will simply write $X$ when 
it is clear over which field the scheme is considered.  

\subsection{Twisted uniformization of Shimura curves}
From now on we will look at the $p$-adic $L$-function attached to the modular form of weight $2$ associated by modularity to an elliptic curve over $\Q$. In particular, let $E$ be an elliptic curve over $\Q$ of conductor $N$, $p$ be a prime such that $p\parallel N$ and $\phi$ be the normalized eigenform on $\Gamma_0(N)$ associated to $E$ by modularity. Using the construction of the previous section, we can define a $p$-adic $L$-function which interpolate the algebraic part of the special value $L(E/K,\chi,1)$ of the complex $L$-function of $E/K$ twisted by $\chi$, for all finite order ramified characters of $\tilde{G}$. We will denote it by $L_p(E/K,\chi,s)$ instead of $L_p(\phi/K,\chi,s)$ to recall that the modular form $\phi$ is that associated to $E/\Q$.

Recall the notation fixed in Section \ref{sec2}: $B$ is the quaternion algebra over 
$\Q$ of discriminant $N^-$; $R$ is a fixed Eichler $\Z[1/p]$-order in $B$ of level $N^+$; $R_1^\times$ is the group of the elements of reduced norm $1$ in $R$; $\iota:B_p=B\otimes_{\Q}\Q_p\simeq\M_2(\Q_p)$ is a fixed isomorphism; 
$\Gamma :=\iota(R_1^\times)\subset \mathrm{PSL}_2(\Q_p)$ is the image of $R_1^\times$ under $\iota$. We also define $\tilde\Gamma_0=\iota(R^\times)$ and $\Gamma_0=\iota(R_+^\times)$, where 
$R_+^\times$ is the subgroup of $R^\times$ consisting of elements with even valuation. Clearly, 
the image of $\Gamma_0$ in $\PGL_2(\Z_p)$ is $\Gamma$, and therefore $\Gamma_0\backslash\mathcal{H}_p=\Gamma\backslash\mathcal{H}_p$. 
Also, $\Gamma_0$ is normal in $\tilde{\Gamma}_0$
and we let $\{1,w_p\}$ be representatives of the quotient group 
$W=\tilde\Gamma_0/\Gamma_0$.   

Let $\hat{\mathcal{H}}_p$ be Drinfeld formal scheme over $\Spf(\Z_p)$ whose generic fiber is $\mathcal{H}_p$. 
The quotient $\Gamma_0\backslash\hat{\mathcal{H}}_p$ is then the formal completion along its closed fiber 
of a scheme $X_{\Gamma_0}$, which is projective over $\Z_p$. 

As in Section \ref{sec2}, denote $X=X_{N^+,pN^-}$ the Shimura curve (viewed as a scheme over $\Q$) 
attached to the quaternion algebra $\mathcal{B}=\mathcal{B}_{pN^-}$ 
of discriminant $pN^-$ and the Eichler order $\mathcal{R}=\mathcal{R}_{N^+}$ of level $N^+$ 
in $\mathcal{B}$. It is well known that over the quadratic 
unramified extension $\Q_{p^2}$ of $\Q_p$ we have an isomorphism of $\Q_{p^2}$-schemes 
$X_{\Gamma_0}\simeq X$. Moreover, let 
\[[\varphi] \in H^1\left(\Gal(\Q_{p^2}/\Q_p),\Aut(X_{\Q_{p^2}})\right)\]
be the class represented by the cocycle 
$\varphi:\Gal(\Q_{p^2}/\Q_p)\rightarrow\Aut(X_{\Q_{p^2}})$ 
which maps the generator $\Frob_p$ of $\Gal(\Q_{p^2}/\Q_p)$ to 
$w_p$.  Then (\emph{cf}. \cite[Theorem 4.3']{JL}) there is an isomorphism 
\[X_{\Gamma_0}^\varphi\overset \sim \longrightarrow X\] 
of schemes over $\Q_p$, where $X_{\Gamma_0}^\varphi$ denotes the twist of $X_{\Gamma_0}$ by $\varphi$.  
In particular, 
%if we let  $F_\p=K_\p\cdot\Q_{p^2}$ denote the compositum of the fields $K_\p$ and $\Q_{p^2}$, then\[X_{\Gamma_0}^\varphi(K_ \p)= X_{\Gamma_0}(F_\p)^{\Frob_\p=w_p}\]where we denote $\Frob_\p$ the generator of $\Gal(F_\p/K_\p)\simeq\Gal(\Q_{p^2}/\Q_p)$ and \[X_{\Gamma_0}(F_\p)^{\Frob_\p=w_p}=\{P\in X_{\Gamma_0}(F_\p)| P^{\Frob_\p}=\varphi(\Frob_\p)(P)\}=\{P\in X_{\Gamma_0}(F_\p)| P^{\Frob_\p}=w_p(P)\}.\] More generally, 
if $F$ is any extension of $\Q_p$ such that $F\cap\Q_{p^2}=\Q_p$ and $L=F\cdot\Q_{p^2}$, then we have 
\[X_{\Gamma_0}^\varphi(F)= X_{\Gamma_0}(L)^{\Frob_\p=w_p}\] 
where $\Frob_\p=\Frob_\p(L/F)$ denotes the generator of $\Gal(L/F)$ and 
\[X_{\Gamma_0}(L)^{\Frob_\p=w_p}=
%\{P\in X_{\Gamma_0}(L)| P^{\Frob_\p}=\varphi(\Frob_\p)(P)\}=
\{P\in X_{\Gamma_0}(L)| P^{\Frob_\p}=w_p(P)\}.
\] 

\subsection{Twisted uniformization of elliptic curves} \label{sec:twisted}
As a consequence of the Jacquet-Langlands correspondence,  
it is known that there exists a modular uniformization 
\[\pi_E:\Div^0(X)\longepi E\] 
of $\Q_p$-schemes. Let 
\[\Phi_\mathrm{Tate}: \C_p^\times/q^\Z\longrightarrow E(\C_p)\] 
be the Tate uniformization of $E$, where $q$ is the Tate period, which is defined 
over $\Q_{p^2}$ if $E$ has non-split multiplicative reduction over $\Q_p$, 
and over $\Q_p$ if $E$ has split multiplicative reduction over $\Q_p$.  
Let $\log=\log_q$ be the choice of $p$-adic logarithm 
satisfying $\log_q(q)=0$, and define 
\[\log_E:E(\C_p)\longrightarrow \C_p\] 
by the formula $\log_E(P)=\log_q(x)$ for any $x\in\C_p$ such that $\Phi_\mathrm{Tate}\left([x])\right)=P$
(having denoted $[x]$ the class of $x$ in $\C_p^\times/q^\Z$). 

Over $\Q_{p^2}$, composing $\pi_E$ with the isomorphism $X_{\Gamma_0}\simeq X$, we get a map  
\begin{equation}\label{pi} \Div^0\left(X_{\Gamma_0}\right)\overset\sim
\longrightarrow \Div^0\left(X\right)\overset{\pi_E}\longrightarrow E.\end{equation}
This map can be described explicitly by means of $p$-adic Coleman integrals (\emph{cf}. \cite{Tei}). 
First, consider the map 
\[\mathcal{H}_p\times\mathcal{H}_p\longrightarrow \C_p^\times\] 
defined by 
\[[x]-[y]\longmapsto \mint_{[x]-[y]}\omega_f=\mint_{\mathbb{P}_1(\Q_p)}\frac{t-x}{t-y}d\mu_{f}\]
where $\int\!\!\!\!\!\times$ denotes the multiplicative 
integral defined by taking limits of Riemann products instead of sums (for more details see \cite{Dar}, Section 1) and recall that  
$\mu_f$ is the measure on $\mathbb{P}_1(\Q_p)$ constructed in \S \ref{cocyclesec}. 
Extending by $\Z$-linearity, we get a map 
\[\mint\omega_f:\Div^0(\mathcal{H}_p)\longrightarrow \C_p^\times.\] 
denoted $d\mapsto \int\!\!\!\!\!\times_d\omega_f$. 
The map in \eqref{pi} can be described as the map which takes a divisor of degree zero $d$ of $X_{\Gamma_0}$ 
defined over an extension of $\Q_{p^2}$ to 
\[\pi_{\Gamma_0,E}(d)=\Phi_\mathrm{Tate}\left(\mint_{\tilde d}\omega_f\right)\]
where $\tilde{d}$ is any degree zero divisor in $\Div^0(\mathcal{H}_p)$ 
which is sent to $d$ via the isomorphism of rigid analytic spaces 
$\Gamma_0\backslash\mathcal{H}_p\simeq X_{\Gamma_0}(\C_p)$. 

%We therefore obtain a map\[\pi_{\Gamma_0,E}^\varphi:\Div^0\left(X_{\Gamma_0}^\varphi(K_\p)\right)\subseteq\Div^0\left(X_{\Gamma_0}(F_\p)\right)\overset{\pi_{\Gamma_0,E}}\longrightarrow E(F_\p)\]which makes the following diagram commutative\[\xymatrix{\Div^0\left(X(K_\p)\right)\ar[rd]^-{\pi_E}& \\\Div^0\left(X_{\Gamma_0}^\varphi(K_\p)\right)\ar[r]^-{\pi_{\Gamma_0,E}^\varphi} \ar[u]^\sim&  E^\varphi(K_\p)}\]

Over $\Q_p$, we have a map 
\begin{equation}
\label{unif}
\Div^0\left(X_{\Gamma_0}^\varphi\right)\overset\simeq\longrightarrow \Div(X) \longrightarrow E.
\end{equation}
%Let $E^\varphi$ be the twist of $E$ by the character, again denoted $\varphi:\Gal(\Q_{p^2}/\Q_p)\rightarrow\{\pm1\}$ by a slight abuse of notation, which takes $\Frob_p$ to $a_p$ (and thus $\varphi$ is trivial if $E$ has split multiplicative reduction over $\Q_p$ and is the unique quadratic character of $\Gal(\Q_{p^2}/\Q_p)$ if $E$ has non-split multiplicative reduction over $\Q_p$;  the abuse of notation is justified because Jacquet-Langlands correspondence takes $w_p$ to $a_p$, \emph{cf.} \cite[Theorem 1.2]{BD-Heegner}). Writing \[ E(F_\p)^{\Frob_\p=a_p} = \{P\in E(F_\p)| P^{\Frob_\p}=a_p\cdot P\}\]\[E^\varphi(K_\p)\simeq E(F_\p)^{\Frob_\p=a_p}.\]
Let $F$ be any finite field extension of $\Q_p$, let $L/F$ be the quadratic unramified extension of $F$, 
and let $q$ be as above the Tate period of $E$. Define
\[ \Sigma_F(E)= \begin{cases} F^\times/q^\Z,& \text{if $E/F$ has split multiplicative reduction}, \\
\{u\in L^\times/q^\Z \, | \, \mathrm{N}_{L/F}(u)\in q^\Z/q^{2\Z}\},& \text{otherwise},
\end{cases}
\] where $\mathrm{N}_{L/F}$ denotes the norm map. 
Tate's theory of $p$-adic uniformization implies then that the following diagram is commutative:
\[\xymatrix{
&\Sigma_F(E)\ar[r]^{\Phi_\mathrm{Tate}}_\simeq & E(F)\ar@{^(->}[r] & E(L) \\
\Div^0\left(X_{\Gamma_0}^\varphi(F)\right)=\hspace{-1cm}& \Div^0\left(X_{\Gamma_0}(L)^{\Frob_\p=w_p}\right)\ar[u]^{{\times \hskip -0.7em \int}\omega_f}\ar@{^(->}[rr] & & \Div^0\left(X_{\Gamma_0}(L)\right) \ar[u]^\simeq _{\pi_{\Gamma_0,E}}
}
\] where $\Phi_\mathrm{Tate}:L^\times/q^\Z\rightarrow E(L)$ denotes as above the Tate uniformization.

\begin{remark}
One can directly check that the dotted arrows factors through $\Sigma_F(E)$. 
If a divisor $d=\sum_ix_i$ in $\Div^0(X_{\Gamma_0}(L))$ satisfies  
$\Frob_\p(x_i)=w_p(x_i)$ for all $i$, 
using that the isomorphism $\Gamma_0\backslash\mathcal{H}_p\simeq X_{\Gamma_0}(\C_p)$ 
is equivariant for the action of $w_p$, and the action of $w_p$ on $\mu_f$ is via $a_p$, we see that
\[\mathrm{N}_{L/F}\left(\mint_{\tilde d}\omega_f\right)=
\left(\mint_{\tilde d}\omega_f\right)\cdot \left(\mint_{w_p(\tilde d)}\omega_f\right)=\left(\mint_{\tilde d}\omega_f\right)^{1+a_p}\] which shows that $\int\!\!\!\!\!\times_{\tilde d}\omega_f$ belongs to $F^\times$ if $a_p=1$ and 
is trivial if $a_p=-1$.  
\end{remark}

\subsection{Heegner points}\label{sec:Heegner} Let $P_H\in X(H)$ be a Heegner point of conductor $1$, 
defined over the Hilbert class field of $K$. Let $y_H=\pi_E(P_H)$. 
Our next task is to use the twisted version of the Cerednik-Drinfeld theorem 
recalled above to describe the localization of the point $y_H$.  

Recall that $\p=\p_K$ denotes the unique prime of $K$ above $p$. 
Fix a prime $\p_H$ of $H$ above $p$, and let $\p_{H_p}$ be its restriction to $H_p$. 
The completion $H_{p,\p_{H_p}}$ of $H_p$ at $\p_{H_p}$ is then isomorphic to the 
completion $K_\p$ of $K$ at $\p$, and the completion $H_{\p_H}$ of $H$ at $\p_H$ is 
a (finite unramified) extension of $K_\p$.  
\footnote{Denote $H_\p=H_{\p_H}$ to simplify the notation 
(understanding then the choice of $\p_H$ which has been made).} We also observe that, 
since $p$ splits completely in $H_{p}$, the prime $\p_H$ is the unique prime of $H$ above $\p_{H_p}$.

We still denote $P_H$ both the point in $X(H_{\p_H})$ obtained by localization, 
and the point  in $X_{\Gamma_0}^\varphi(H_{\p_H})$ corresponding to $P_H$
under the isomorphism $X_{\Gamma_0}^\varphi\simeq X$. If $F=H_{\p_H}\cdot \Q_{p^2}$, then the point 
$P_H\in X_{\Gamma_0}^\varphi(H_{\p_H})$ 
corresponds to a point $\tilde{P}_H\in X_{\Gamma_0}(F)$ on which the generator 
$\Frob_p$ of $\Gal(F/H_{\p_H})$ acts by $w_p$. Finally denote $\xi(P_H)\in\mathcal{H}_p$ a representative of the point in $\Gamma_0\backslash\mathcal{H}_p$ 
corresponding to $\tilde{P}_H$ via the isomorphism
$\Gamma_0\backslash\mathcal{H}_p\simeq X_{\Gamma_0}(\C_p)$
of algebraic varieties over $\C_p$. 

Let $(A,i,C)$ be the modular point associated with $P_H$ by the modular interpretation of $X$, 
as described for example in \cite[Section 4]{BD-Heegner}. In particular, $A$ is an abelian 
variety defined over $H$, with complex multiplication by $\mathcal{O}_K$. 
Let $A^\mathrm{red}$ be the reduction of (an integral model of) $A$ modulo $\p_H$, which is then an abelian variety defined 
over the residue field of $H_{\p_H}$. It is known (see for example \cite[Proposition 5.2]{Molina}) that the reduction of 
endomorphism gives a map \[\Psi=\Psi(P_H):\End(A)\longrightarrow\End(A^\mathrm{red})\] which is an optimal 
embedding of $\mathcal{O}_K$ into an Eichler $\Z$-order $R_0$ of level $N^+p$ in the quaternion algebra $B$. 
Since any two Eichler $\Z[1/p]$-orders of the same level in $B$ are conjugate, we may assume that $R_0[1/p]=R$, where $R$ is the $\Z[1/p]$-order used to define the $p$-adic $L$-function. 
So, tensoring $\Psi$ with $\Z[1/p]$, we get an optimal embedding $\Psi:\mathcal{O}\rightarrow R$. 
Since $A$ has complex multiplication by $\mathcal{O}_K$, the point $\xi(P_H)\in\mathcal{H}_p$ 
representing the class of $\tilde{P}_H$ as above 
is fixed by the action of 
$\Psi(\mathcal{O}_K)$ and therefore coincides with one of the two fixed points for the action of $\Psi(\mathcal{O}_K)$ on $\mathcal{H}_p$. We normalize the choice of the fixed point so that the action of $K_\p^\times$
on the tangent line of $\mathcal{H}_p$ at $\xi(P_H)$ is via the character 
$z\mapsto z/\bar{z}$, as opposite to the character $z\mapsto {\bar{z}}/z$. 
We may thus write, with the usual notation, $\xi(P_H)=z_{\Psi}$; if we  
need to specify the point $P_H$ that we started with to construct $z_{\Psi}$, we write $z_{{\Psi}(P_H)}$.

If $\sigma\in\Gal(H/K)$, then we may consider the conjugate point $P_H^\sigma$ in $X(H)$, 
and its localization $P_H^\sigma$ in $X(H_{\p_H})$ as above. 
We then get a point $z_{\Psi(P_H^\sigma)}$ in $\mathcal{H}_p$; 
since $(\Psi(P_H))^\sigma=\Psi(P_H^\sigma)$, we have $ z_{\Psi(P_H^\sigma)}=
z_{(\Psi(P_H)) ^\sigma}$. Having fixed our point $P_H$ and $\Psi=\Psi(P_H)$, the 
last formula simply reads as $z_{\Psi(P_H^\sigma)}=z_{\Psi^\sigma}$. 
Since $\Gal(H/H_p)$ acts trivially on $\mathrm{emb}(\mathcal{O},R)$, and $\Gal(H_p/K)$ 
acts without fixed points, we see that there are 
exactly $[H:H_p]$ points whose associated point in $\mathcal{H}_p$ is 
a given $z_{{\Psi}^\sigma}$.

Observe that, since $H_{\p_H}/K_\p$ is unramified and $K_\p/\Q_p$ is totally ramified, the Galois group $\Gal(H_{\p_H}/\Q_p)$ can be written as 
\[\Gal(H_{\p_H}/K_\p)\times\Gal(K_\p/\Q_p)\simeq\langle\mathrm{Frob}_\p\rangle\times\langle\tau\rangle,\] where $\tau$ is the map induced on the completion $K_\p$ by the complex conjugation on $K$.
We can see the abelian variety $A$ as defined over $H_{\p_H}$, thus we can consider the map $f\colon A\rightarrow \Spec(H_{\p_H})$. We also have the endomorphism $\tau^\#$ induced on $\Spec(H_{\p_H})$ by the complex conjugation $\tau$. Let $\bar{A}$ be the fibred product of $A$ and $\Spec(H_{\p_H})$ over $\Spec(H_{\p_H})$ with respect to $f$ and $\tau^\#$, that is $\bar{A}=A\times_{H_{\p_H},\tau^\#} H_{\p_H}$. Denote by $p_1$ and $p_2$ the projections of $\bar{A}$ onto $A$ and $\Spec(H_{\p_H})$ respectively. For every $\varphi\in \End (A)$, we have $f\varphi p_1=\tau^\# p_2$
%\footnote{perché $f\varphi=f$ per defnizione di endomorfismo di uno schema su $H_{\p_H}$}
, therefore by the universal property of the fibred product there exists a unique $\rho\in \End(\bar{A})$ such that $\varphi p_1=p_1\rho$. Thus we obtain a map $\alpha\colon\End(A)\rightarrow\End(\bar{A})$. Now observe that $A^\mathrm{red}$ is isomorphic to $\bar{A}^\mathrm{red}$ as schemes, because $p_1$ is induced by $\tau$ and, since $\tau$ is the generator of a totally ramified extension, at level of residue fields it is the identity.
%\footnote{cioè $\overline{x}=\overline{\tau(x)}$} 
It follows that, when we pass to the reduction, the map $\bar{\Psi}\colon\End(\bar{A})\rightarrow\End(\bar{A}^\mathrm{red})$ satisfies $\bar{\Psi}\alpha=\Psi$. In particular, if $z\in \cO_K\subseteq\End(A)$ then $\alpha(z)=\tau(z)$ and $\bar{\Psi}(z)=\Psi(\tau(z))$. It follows that the optimal embedding $\bar{\Psi}\colon \cO\rightarrow R$ induced by $\bar{A}$ is given composing $\tau\colon \cO\rightarrow\cO$ with $\Psi\colon \cO\rightarrow R$, therefore if $\xi(P_H)\in\mathcal{H}_p$ is the point corresponding to the abelian variety $A$ then 
$\overline{\xi(P_H)}=\tau(\xi(P_H))$ is the point corresponding to $\bar{A}$. But $\bar{A}$ is just the modular point associated to $\bar{P}_H\in X(H)$, where $\bar{P}_H$ is the complex conjugate of $P_H\in X(H)$. 
In light of the above normalization, we therefore have
 %As we can see $P_H$ in $X(H_\p)$, we also see the abelian variety $A$ associated to $P_H$ by the modular interpretation of $X$ as defined over $H_\p$. Let $\bar{A}$ be the variety obtained applying $\tau\in\Gal(H_\p/\Q_p)$ to the coefficients of the equations of $A$. It is again an abelian variety defined over $H_\p$ and having complex multiplication by $\cO_K$. Denote by $\bar{P}_H$ the Heegner point of $X(H)$ associated $\bar{A}$ by the modular interpretation of $X$. Since $\bar{A}$ has complex multiplication by $\cO_K$, the point of $\mathcal{H}_p$ corresponding to $\bar{A}$ under the previous construction is fixed by the action of $\Psi(\cO_K)$ \footnote{però forse c'è da dimostrare che questo $\Psi$ che è indotto da $\bar{A}$ coincide con quello di prima, indotto da $A$}. Since there are only two points in $\mathcal{H}_p$ fixed by $\Psi(\cO_K)$ and they are the roots of a quadratic equation, if $\xi_H$ corresponds to $P_H$ then $\bar{\xi}_H$, which is the image of $\xi_H$ under $\tau\in\Gal(K_\p/\Q_p)$, corresponds to $\bar{P}_H$.\\ 
%We also note that, denoting $z\mapsto\bar{z}$ 
%as above the action of $\Gal(K_\p/\Q_p)$, the point corresponding to $\bar{P}_H$ is $\xi_H$. 
\begin{equation}\label{coleman}\pi_{E}(P_H-\bar{P}_H)= \pi_{\Gamma_0,E}\left(\xi(P_H)-\overline{\xi(P_H)}\right)=
\Phi_\mathrm{Tate}\left(\mint_{\bar{z}_\Psi}^{z_\Psi}\omega_f\right)=\Phi_{\mathrm{Tate}}\left(\mint_{\mathbb{P}^1_{\Q_p}}\frac{t-z_\Psi}{t-\bar{z}_\Psi}d\mu_{f}\right).\end{equation}

Fix a Heegner point $P_H\in X(H)$,
the optimal embedding $\Psi=\Psi(P_H)\in\mathrm{emb}(\mathcal{O},R)$ 
and the $p$-adic point $z_\Psi\in\mathcal{H}_p$ as above. 
For any character $\chi:\tilde{G}\rightarrow\bar\Q_p^\times$, denote $\Z[\chi]$ the 
ring extension of $\Z$ generated by the values of $\chi$. For a character $\chi$ factoring through $\Gal(H/K)$, 
define 
\[P_\chi=\sum_{\sigma\in\Gal(H/K)}P_H^\sigma\otimes\chi^{-1}(\sigma)\]
which is an element in the group ring $\Div(X(H))\otimes_\Z\Z[\chi]$.
%\footnote{Perché $P_\chi$ sta in $E(H_p)\otimes_\Z\Z[\chi]$ e non in $X(H)\otimes_\Z\Z[\chi]$? OK! } 
Fix now a character $\chi:\tilde{G}\rightarrow \bar\Q_p$ which factors through 
$\Delta=\Gal(H_p/K)$ (so, it is trivial on $G$). 
Define the divisor \[P_{H_p}=\sum_{\sigma\in\Gal(H/H_p)}P_{H}^\sigma.\]
We then have in the group ring $\Div(X(H))\otimes_{\Z}\Z[\chi]$, 
\begin{equation}\label{eq7}
P_\chi=\sum_{\sigma\in\Delta}P_{H_p}^{\tilde\sigma}\otimes \chi^{-1}(\tilde\sigma)\end{equation}
where $\tilde\sigma\in\Gal(H/K)$ is any lift of $\sigma\in\Delta$.   

Following the notation in \S\ref{sec:twisted}, 
define \[z_\chi=\sum_{\sigma\in\Gal(H/K)}z_{\Psi^{\sigma}}\otimes\chi^{-1}(\sigma).\]
Since, as observed above, 
$\Gal(H/H_p)$ acts trivially on $\Psi$, we have 
\[\sum_{\sigma\in\Gal(H/H_p)}z_{\Psi^\sigma}=[H:H_p]\cdot z_\Psi\] 
as elements in $\Div(\mathcal{H}_p)$ 
and therefore 
\[z_\chi=[H_p:H]\cdot\sum_{\sigma\in\Delta}z_{\Psi^{\tilde\sigma}}\otimes\chi^{-1}(\tilde\sigma).\]
where as above $\tilde\sigma$ is any lift of $\sigma$. 
Similarly, we may consider the point $\bar{z}_\chi$ obtained with the same process, 
but starting with the point $\bar{P}_H$ instead of $P_H$. 
Using \eqref{coleman}, \eqref{eq7} and the fact that all points in the 
divisor $P_{H_p}^{\tilde\sigma}$ reduce to the same $z_{\Psi^{\tilde\sigma}}$, we have
\begin{equation}\label{coleman1}\pi_{E}(P_\chi-\bar{P}_\chi)=
\Phi_\mathrm{Tate}\left(\mint_{\bar{z}_\chi}^{z_\chi}\omega_f\right)=
[H:H_p]\cdot\Phi_{\mathrm{Tate}}\left(\prod_{\sigma\in\Delta}
\left(
\mint_{\mathbb{P}^1_{\Q_p}}
\frac{t-z_{\Psi^{\sigma}}}{t-\bar{z}_{\Psi^{\sigma}}}d\mu_{f}
\right)^{\chi^{-1}(\tilde\sigma)}
\right).
\end{equation}

\section{Zeros of the $p$-adic $L$-function at the central point $s=1$}
%By the interpolation property \eqref{interp}, if the complex $L$-function is zero in $s=1$, also the $p$-adic $L$-function is; if they have the same order of vanishing in $s=1$, it is a so-called \emph{trivial zero}. Nonetheless it can happen that the $p$-adic $L$-function is zero, while the complex one is not zero or, in general, that the order of vanishing of the $p$-adic $L$-function is greater than that of the complex $L$-function: we will refer to this case as \emph{exceptional zero}.

\subsection{Signs} 

%Let $E/\Q$ be an an elliptic curve with multiplicative reduction at $p$, a rational prime, and let $Np$ be the arithmetic 
%conductor of $E$ (so $p$ does not divide the positive integer $N$). Let $K/\Q$ be a quadratic imaginary extension 
%of discriminant $D$, and assume that $p$ is ramified in $K$, and all primes dividing $N$ are not ramified in $K$. 
%Write $N=N^+\cdot N^-$, where $\ell\mid N^+$ if and only if $\ell$ is split in $K$, and 
%$\ell\mid N^-$ if and only if $\ell$ is inert in $K$. We assume that $N^-$ is square-free, and let 
%$n^-$ denote the cardinality of the set of primes dividing $N^-$. 

Let $G_n=\Gal(K_n/K)$ be the Galois group of the ring class field of $K$ of conductor $p^n$ over $K$. Fix a character $\chi:G_n\rightarrow\bar{\Q}^\times$. We are interested in computing the sign of the functional equation of $L(E/K,\chi,s)$ at its center of symmetry $s=1$, which we denote $w(E,\chi)$. We will say that we have a \emph{change of sign phenomenon} if the sign of functional equation of $L(E/K,\chi,s)$ 
is different for $\chi=\mathbf{1}$ and $\chi$ ramified at $p$.

If we let $w_\ell(E,\chi)$ denote the local sign of the $L$-function of $E/K$ and $\chi$ at a rational prime $\ell$, then it is known 
that the sign $w(E,\chi)$ of $L(E/K,\chi,s)$ is 
\[w(E,\chi)=(-1)^{\#\Sigma(E,\chi)+1}\]
where $\Sigma(E,\chi)$ is the set of rational primes $\ell$ such that $w_\ell(E,\chi)\neq \eta_K(-1)$. 
See \cite[Theorem 1.3.2]{YZZ} or \cite[Sec. 1.1]{CV} for details. 

By \cite[(1.1.3)]{Zhang-AJM}, if $\ell\neq p$, then $\ell\in\Sigma(E,\chi)$ if and only if $\ell\mid N^-$. So let us consider the case of $p$.
Again by \cite[(1.1.3)]{Zhang-AJM}, $p\in \Sigma(E,\chi)$ if and only if the $p$-th Fourier coefficient of $\phi$, that is $a_p$, is the inverse of the $p$-th Fourier coefficient 
\[b_p=\sum_{\substack{\mathfrak{a}\subset \cO_K\\N_K(\mathfrak{a})=p}}\chi(\mathfrak{a})\]
of the Hecke theta series associated to $\chi$.
On the other hand, by a result of Cornut and Vatsal (see \cite[Lemma 1.1]{CV}), if $\chi$ is sufficiently ramified at $p$, $p$ does not belong to $\Sigma(E,\chi)$. In our situation, one can show 
that $p\not\in\Sigma(E,\chi)$ whenever $\chi$ is ramified at $p$. 

\begin{remark} 
If $\chi=\mathbf{1}$ is the trivial character, then $b_p=1$ because, since $p$ is ramified in $K$, there is only one prime in $\cO_K$ with norm $p$, that is the unique prime above $p$. This implies that $p\in\Sigma(E,\chi)$ if and only if $a_p=1$, i.e. $E$ has \emph{split} multiplicative reduction in $p$.
Therefore we may conclude that if $E$ has split multiplicative reduction, then we have at least one change of sign phenomenon. The same holds for all characters $\chi$ such that $\chi(\p)=1$. 
On the contrary, if $a_p=-1$, i.e. $E$ has \emph{non-split} multiplicative reduction in $p$, we have a change of sign phenomenon for all characters $\chi$ such that $b_p=\chi(\p)=-1$.
\end{remark}

%In light of the above considerations, it makes sense to introduce the following: 

%\begin{definition}
%We say that a finite order character $\chi:\tilde{G}\rightarrow\bar\Q_p$ 
%factoring through $\Gal(H_p/K)$ 
%\emph{has a trivial zero} if $a_p\cdot\chi(\p)=1$, and \emph{has an exceptional zero} if 
%$a_p\cdot\chi(\p)\neq1$. 
%\end{definition} 
We denote $\mathcal{Z}$ the subset of $\mathcal{X}$ consisting of finite order characters $\chi\colon\tilde{G}\rightarrow\bar{\Q}_p$ factoring through $\Delta=\Gal(H_p/K)$.
 
%and let $\mathcal{X}^\mathrm{tr}_\mathrm{fin}$ (respectively, $\mathcal{X}^\mathrm{ex}_\mathrm{fin}$) 
%denote the subset of $\mathcal{X}_\mathrm{fin}$ 
%consisting of characters which have a trivial (respectively, exceptional) zero. 

%Denote $K_p=K\otimes_\Q\Q_p$, a degree 2 ramified extension of $\Q_p$, 
%let $\norm_{K_p/\Q_p}:K_p\rightarrow\Q_p$ be the Galois norm map, and $\psi$ the 
%representation of the Deligne-Weil group at $p$ associated with $E$.  
%Since $E$ has multiplicative reduction, 
%$\psi$ is not ramified, so the composition $\psi\circ\norm_{K_p/\Q_p}$ is trivial on $\mathcal O_{K_p}$, the ring of integers of $K_p$.
%On the other hand, since $\chi$ is ramified, the restriction $\chi_p$ of $\chi$ to $K_p^\times$ is non trivial on $\mathcal O_{K_p}$, and therefore 
%$\chi_p^{-1}\neq\psi\circ\norm_{K_p/\Q_p}$. By \cite[Proposition 1.7]{Tunnell}, $p\in\Sigma(E,\chi)$ if and only if 
%$p\equiv3\mod4$.

%We may therefore conclude that we have a change of sign in two possible settings: 
%\begin{enumerate}
%\item $p\equiv 1\mod 4$ and $a_p=1$ (split multiplicative reduction case). 
%\item $p\equiv 3\mod 4$ and $a_p=-1$ (non split multiplicative reduction case). 
%\end{enumerate}

%\begin{remark}
%It might be possible that the sign of the functional equation of $L(E/K,\chi,s)$ 
%is different for $\chi$ non trivial and non-ramified at $p$ (for example, a genus character),
%and $\chi$ ramified at $p$. 
%We will not investigate this interesting phenomenon in this paper. 
%\end{remark}

%\subsection{Trivial zeroes} 
%We first consider the case of trivial zeroes.  
By definition, the value of $L_p(E/K,\Psi,\star,s)$ in $s=1$ does not depend on the choice of $\star$, i.e.
$$L_p(E/K,\Psi,\star,1)=L_p(E/K,\Psi,\star',1).$$
Therefore we can write $L_p(E/K,\Psi,1)$ instead of $L_p(E/K,\Psi,\star,1)$.

Define $h_p=[H_p:K]=|\Delta|$. Fix $\Psi=\Psi_1$ in $\mathrm{emb}(\cO,R)$, and let $\Psi_2,\dots,\Psi_{h_p}$ 
be its conjugates under the action of $\Delta$ on $\mathrm{emb}(\cO,R)$. We order the elements of $\Delta=\{\sigma_1,\dots,\sigma_{h_p}\}$ in such a way that $\Psi^{\sigma_i^{-1}}=\Psi_i$.

\begin{lemma}If $\chi\in\mathcal{Z}$, then 
$L_p(E/K,\chi,1)=0$ and in fact $L_p(E/K,\Psi,1)=0$ for all $\Psi$ in $\mathrm{emb}(\cO,R)$.
\end{lemma}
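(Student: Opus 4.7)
The plan is to reduce both statements to the vanishing of the total mass $\mu_{f,\Psi,\star}(G)$, which follows from the $\{\pm 1\}$-symmetry of the construction combined with the harmonicity of $c_f$.

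I would begin by observing that $\chi\in\mathcal{Z}$ factors through $\Delta=\Gal(H_p/K)$, so $\chi$ is trivial on $G=\Gal(K_\infty/H_p)$. Since $k=2$, the auxiliary twist $g^{s-k/2}$ is trivial at $s=1$, and hence
\[L_p(E/K,\Psi,\star,\chi,1)=\int_G\chi(g)\,d\mu_{f,\Psi,\star}(g)=\mu_{f,\Psi,\star}(G),\]
which is manifestly independent of $\star$ (thereby justifying the shorthand $L_p(E/K,\Psi,1)$ used in the statement). By the definition of the measure on $G=K_{p,1}^\times/\cO_1^\times$ via a fundamental domain, this quantity equals $\mu_{f,\Psi,\star}(\mathcal{F})$, where $\mu_{f,\Psi,\star}$ now denotes the ``upstairs'' measure on $K_{p,1}^\times$ and $\mathcal{F}$ is any fundamental domain for the action of $\cO_1^\times=\{\pm 1\}$.

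Next, I would invoke the $\cO_1^\times$-invariance of $\mu_{f,\Psi,\star}$ on $K_{p,1}^\times$ already established in \S\ref{cocyclesec}. Since $K_{p,1}^\times=\mathcal{F}\sqcup(-1)\mathcal{F}$, this invariance yields
\[2\,\mu_{f,\Psi,\star}(\mathcal{F})=\mu_{f,\Psi,\star}(K_{p,1}^\times).\]
For $k=2$ the weight factor $P_{u_\Psi}^{(k-2)/2}$ equals $1$, so the defining pullback along $\eta_{\Psi,\star}^{-1}$ sends $\mathbf{1}_{K_{p,1}^\times}$ to $\mathbf{1}_{\PP_1(\Q_p)}$, giving
\[\mu_{f,\Psi,\star}(K_{p,1}^\times)=\int_{\PP_1(\Q_p)}d\mu_f=0\]
by part (1) of the lemma in \S\ref{cocyclesec} (the constant polynomial case, which is itself a direct consequence of the harmonicity of $c_f$). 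Thus $\mu_{f,\Psi,\star}(G)=0$, and the second assertion follows for every $\Psi$.

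For the first assertion, I would decompose $\tilde G=\bigsqcup_{\bar\delta\in\Delta}\delta G$ and apply the defining identity $\int_{\tilde G}\varphi\,d\mu_{f,K}=\sum_{\bar\delta\in\Delta}\int_G\varphi(\delta x)\,d\mu_{f,\Psi^{\delta^{-1}},\star^{\delta^{-1}}}(x)$ to $\varphi=\chi$. The triviality of $\chi$ on $G$ reduces each summand to $\chi(\delta)\cdot\mu_{f,\Psi^{\delta^{-1}},\star^{\delta^{-1}}}(G)$, and these all vanish by the second assertion. No serious obstacle is anticipated; the argument is essentially a bookkeeping of the $\{\pm 1\}$-symmetry built into the construction of $\mu_{f,\Psi,\star}$ and relies only on the harmonicity of the cocycle $c_f$, both of which are available from the start.
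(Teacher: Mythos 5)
Your proof is correct, and it reaches the key vanishing by a genuinely different (and arguably cleaner) route than the paper. The overall architecture coincides: you reduce the vanishing of $L_p(E/K,\chi,1)$ to the per-embedding statement $L_p(E/K,\Psi,1)=\mu_{f,\Psi,\star}(G)=0$ via the coset decomposition $\tilde G=\bigsqcup_{\bar\delta\in\Delta}\delta G$ and the triviality of $\chi$ on $G$, exactly as in the paper. Where you diverge is in proving $\int_{\mathcal F}d\mu_{f,\Psi,\star}=0$. The paper exhibits an explicit fundamental domain on the Bruhat--Tits tree --- the union of the $U(e)$ for edges $e$ emanating from the geodesic joining $v$ to $\gamma v$ but not lying on it --- and then telescopes $\sum_{e\in\mathcal E'}c_f(e)$ to zero using harmonicity at each vertex of that path. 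You instead exploit the $\cO_1^\times$-invariance of the measure already established in \S 2.4: since $\cO_1^\times=\{\pm1\}$ and $K_{p,1}^\times=\mathcal F\sqcup(-1)\mathcal F$ with $\mu_{f,\Psi,\star}((-1)\mathcal F)=\mu_{f,\Psi,\star}(\mathcal F)$, you get $2\mu_{f,\Psi,\star}(\mathcal F)=\mu_{f,\Psi,\star}(K_{p,1}^\times)=\mu_f(\mathbb P_1(\Q_p))=0$ by part (1) of the lemma in \S 2.3 (using that $P_{u_\Psi}^{(k-2)/2}=1$ for $k=2$). Both arguments rest on the harmonicity of $c_f$, but yours invokes it only through the already-proved total-mass-zero statement (harmonicity at a single vertex), avoids any explicit tree combinatorics, and makes it transparent that the answer is independent of the choice of $\star$ and of the fundamental domain; the paper's version gives more concrete local information (the telescoping identity along the path), which is of independent use but is not needed here. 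One cosmetic remark: your assertion that $\int_G\chi\,d\mu_{f,\Psi,\star}$ is ``manifestly'' independent of $\star$ is really only justified a posteriori by the fact that it vanishes, but since that is what you prove, nothing is lost.
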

\begin{proof}
Observe that the first statement is trivial if $\chi$ is such that $a_p\chi(\p)=1$: in fact from the previous section, the set $\Sigma(E,\chi)$ contains $p$ and therefore it has even cardinality. It follows that $L(E/K,\chi,1)=0$ and, by interpolation, also $L_p(E/K,\chi,1)=0$.
We now show the stronger statement $L_p(E/K,\Psi,1)=0$ for all $\Psi$ in $\mathrm{emb}(\cO,R)$. This will imply the first one since
%\begin{align*}
$$L_p(E/K,\chi,1)=\int_{\tilde{G}}\chi(g)d\mu_{f,K}(g)=\sum_{i=1}^{h_p}\int_G \chi(\sigma_i)d\mu_{f,\Psi_i,
\star_i}(g)
=\sum_{i=1}^{h_p}\chi(\sigma_i)L_p(E/K,\Psi_i,1),$$
where in the second equality we used the fact that $\chi$ factors through $\Delta$.
%\end{align*}
%By definition
%$$L_p(E/K,1)=\int_{\tilde G} d\mu_{f,K}=\sum_{\bar\delta\in\Delta}\int_G d\mu_{f,\Psi^{\delta^{-1}},\star^{\delta^{-1}}}=\sum_{\bar\delta\in\Delta} L_p(E/K,\Psi^{\delta^{-1}},1)$$
%where for every $\bar\delta\in\Delta$, $\delta$ is a lift to $\tilde G$.

Fix $\Psi\in\mathrm{emb}(\cO,R)$; we now compute $L_p(E/K,\Psi,1)$. Recall that
$$L_p(E/K,\Psi,1)=\int_G d\mu_{f,\Psi,\star}=\int_\mathcal{F} d\mu_{f,\Psi,\star},$$
where $\mathcal{F}$ is a fundamental domain for the action of $\cO_1^\times$ on $K_{p,1}^\times$.

Let $v$ be the vertex of $\T$ corresponding to the embedding $\Psi$, $u_0$ be a generator for the finite group $\cO^\times/\Z[1/p]^\times$ and $u=u_0/\bar u_0$ the image of $u_0$ in $K_{p,1}^\times$. Denote $\gamma=\iota\Psi(u)\in\Gamma$, it has order $2$ since $|\cO^\times/\Z[1/p]^\times|=2$.

To know a fundamental domain for the action of $u$ on $K_{p,1}^\times$ is equivalent to know a fundamental domain for the action of $\gamma$ on $\PP_1(\Q_p)=\mathcal{E}_\infty(\T)$.
Consider the path joining $v$ and $\gamma v$; the ends rising from $v$ are equivalent to those arising from $\gamma v$ under the action of $\gamma$. Therefore the ends arising from the vertices of the path between $v$ and $\gamma v$ are a fundamental domain for the action of $\gamma$ on $\PP_1(\Q_p)$. More explicitly, if $\mathcal{E}'$ denote the set of oriented edges of $\T$ with the same source as the edges on the path from $v$ to $\gamma v$, but not containing these edges, then
$$\mathcal{F'}=\bigcup_{e\in \mathcal{E}'}U(e)$$
is a fundamental domain for the action of $\gamma$ on $\PP_1(\Q_p)$. Therefore $\eta_{\Psi,\star}^{-1}(\mathcal{F'})$ is fundamental domain for the action of $u$ on $K_{p,1}^\times$. It follows that
$$\int_\mathcal{F} d\mu_{f,\Psi,\star}=\int_\mathcal{F'} d\mu_f=\sum_{e\in \mathcal E'}\mu_f(\chi_{U(e)})=\sum_{e\in \mathcal E'} c_f(e).$$
Let $\vec{e}_0,\dots,\vec{e}_k$ be the edges of the path between $v$ and $\gamma v$, then, since $c_f$ is a cocycle, we have
$$\sum_{e\in \mathcal E'} c_f(e)=-c_f(\vec{e}_0)+(c_f(\vec{e}_0)-c_f(\vec{e}_1))+\dots+(c_f(\vec{e}_{k-1})-c_f(\vec{e}_k))+c_f(\vec{e}_k)=0$$ which concludes the proof. 
\end{proof}
When $L(E/K,\chi,1)=0$ (which happens for instance when $a_p\chi(\p)=1$) one expects that the annihilation of $L_p(E/K,\chi,1)$ is due to the existence of an effective non torsion rational point $P_K$ on the elliptic curve $E$. 
%On the contrary the existence of exceptional zeros must be justified in a different way.
More precisely, we will relate $L_p'(E/K,\chi,1)$ to the existence of a global point in $E(\bar\Q)$ in the 
following, in the spirit of the main result of \cite[Corollary 7.2]{BD-CD}.

Let $z_{\Psi_i}$ and $\bar z_{\Psi_i}\in\HH_p$ be the two fixed points for 
$\iota\Psi_i(K_\p^\times)$ acting on $\HH_p$.
%ordered in such a way that $z_{\Psi}^{\sigma_i^{-1}}=z_{\Psi_i}$ (and then $\bar z_{\Psi}^{\sigma_i^{-1}}=\bar z_{\Psi_i}$). 
To simplify the notation, 
%\footnote{Sei d'accordo con questa scelta? mi sembra ok fare questa scelta, visto che utilizziamo 
%sempre questo come punto base, ma ci vedi qualche problema? AGGIUNGENDO UN APICE A $L_p$ NEL PRIMO MEMBRO SONO D'ACCORDISSIMO. O forse 
%togliendo un apice a $L_p$ nel secondo membro? credo fosse questo che intendevo fare.... }
put 
\[L_p(E/K,\Psi_i,\chi,1)=L_p(E/K,\Psi_i,\infty,\chi,1).\]

\begin{lemma}\label{integr}
%Suppose that $p$ is ramified in $K$ and that $E$ has split multiplicative reduction in $p$. 
Let $\chi\in\mathcal{Z}$. Then the following equalities hold up to sign:  
\begin{enumerate}
\item $L_p'(E/K,\Psi_i,\chi,1)=2\int_{\bar z_{\Psi_i}}^{z_{\Psi_i}} f(z)dz,$ 
\item\label{firstder} $L_p'(E/K,\chi,1)=2\sum_{i=1}^{h_p}\chi(\sigma_i)\int_{\bar z_{\Psi_i}}^{z_{\Psi_i}} f(z)dz.$
\end{enumerate}
\end{lemma}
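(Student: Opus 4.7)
The plan is to prove part (1) by direct computation and to deduce part (2) formally. For (2), since $\chi\in\mathcal{Z}$ factors through $\Delta$, it is trivial on $G$; combined with the vanishing of the $p$-adic logarithm on the torsion lifts $\tilde\sigma_i\in\tilde G$ of elements of $\Delta$, the coset decomposition $\tilde G=\bigsqcup_{i=1}^{h_p}\tilde\sigma_iG$ and the definition of $\mu_{f,K}$ in terms of the measures $\mu_{f,\Psi_i,\star_i}$ give
\[
L_p(E/K,\chi,s)=\sum_{i=1}^{h_p}\chi(\sigma_i)\,L_p(E/K,\Psi_i,\chi,s).
\]
Differentiating at $s=1$ and inserting part (1) yields part (2).

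For part (1), use $\chi|_G=1$ and $k=2$ to rewrite $L_p(E/K,\Psi_i,\chi,s)=\int_G g^{s-1}\,d\mu_{f,\Psi_i,\star}(g)$, so that
\[
L_p'(E/K,\Psi_i,\chi,1)=\int_G\log(g)\,d\mu_{f,\Psi_i,\star}(g).
\]
Choose $\star=\infty$ and unfold the right-hand side using the explicit formula $\eta_{\Psi_i,\infty}^{-1}(a)=(a-\bar z_{\Psi_i})/(a-z_{\Psi_i})$ together with the definition of $\mu_{f,\Psi_i,\star}$ on $G$ as the restriction to a fundamental domain $\mathcal{F}\subset K_{p,1}^\times$ for $\cO_1^\times$. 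Since $\eta_{\Psi_i,\infty}$ intertwines multiplication by $u$ on $K_{p,1}^\times$ with the Moebius action of $\gamma_i=\iota\Psi_i(u)$ on $\PP^1(\Q_p)$, the image $\mathcal{F}':=\eta_{\Psi_i,\infty}(\mathcal{F})$ is a fundamental domain for the order-two action of $\gamma_i$, and the integral becomes
\[
L_p'(E/K,\Psi_i,\chi,1)=\int_{\mathcal{F}'}\log\!\left(\frac{a-\bar z_{\Psi_i}}{a-z_{\Psi_i}}\right)d\mu_f(a).
\]

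To finish, the plan is to extend this integral to all of $\PP^1(\Q_p)$ and then recognise it as a Coleman integral. The two fixed points of $\gamma_i$ on $\PP^1(\C_p)$ are $z_{\Psi_i},\bar z_{\Psi_i}\in\HH_p$, and a direct Moebius computation shows that $\log((a-\bar z_{\Psi_i})/(a-z_{\Psi_i}))$ transforms under $a\mapsto\gamma_ia$ by the addition of a term which, via the tangent-action normalisation at $z_{\Psi_i}$ and $\bar z_{\Psi_i}$ fixed in Section \ref{sec:Heegner}, is the $p$-adic logarithm of a root of unity, and therefore vanishes. Combined with the $\Gamma$-invariance of $\mu_f$ and the inclusion $\gamma_i\in\Gamma$, this reduces $\int_{\mathcal{F}'}$ to a definite scalar multiple of $\int_{\PP^1(\Q_p)}$, which by taking logarithms of the multiplicative integral in \eqref{coleman} is identified with $\pm\int_{\bar z_{\Psi_i}}^{z_{\Psi_i}}f(w)\,dw$. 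The main obstacle will be pinning down the precise numerical constant: one has to track carefully the convention for the $p$-adic branch of $\log$ on $G$, the identification $G\simeq K_{p,1}^\times/\cO_1^\times$, the sign conventions in the formula relating multiplicative and additive Coleman integrals, and the exact fundamental-domain-to-$\PP^1(\Q_p)$ ratio, so as to match the advertised factor of $2$.
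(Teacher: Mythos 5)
Your proposal follows essentially the same route as the paper: part (1) via $L_p'(E/K,\Psi_i,\chi,1)=\int_G\log(g)\,d\mu_{f,\Psi_i,\infty}(g)$, the explicit formula \eqref{corr}, and the passage from a fundamental domain to all of $\PP_1(\Q_p)$; part (2) by decomposing $\mu_{f,K}$ over the cosets of $G$ in $\tilde G$ (where the cross terms are harmless at $s=1$ since $L_p(E/K,\Psi_i,1)=0$). The numerical constant you flag as the remaining obstacle is exactly the order of the kernel $\cO_1^\times=\{\pm1\}$ of $K_{p,1}^\times\twoheadrightarrow G$ (equivalently, $\PP_1(\Q_p)=\mathcal{F}'\sqcup\gamma_i\mathcal{F}'$ with the integrand $\gamma_i$-invariant because $\log(-1)=0$), giving the factor $2$, and the identification of $\int_{\PP_1(\Q_p)}\log\bigl(\frac{a-z_{\Psi_i}}{a-\bar z_{\Psi_i}}\bigr)d\mu_f(a)$ with $\int_{\bar z_{\Psi_i}}^{z_{\Psi_i}}f(z)\,dz$ is Teitelbaum's formula, which the paper simply cites.
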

\begin{proof} The indeterminacy of the sign in these equations is unavoidable, since the harmonic 
cocycle attached to $f$ is only well defined up to sign. To prove the first statement, 
recall that
$$L_p(E/K,\Psi,\chi,s)=\int_G \chi(g)g^{s-1}d\mu_{f,\Psi,\star}(g),$$
therefore, since $\chi$ is trivial on $G$, we have 
\[L_p'(E/K,\Psi,\chi,s)|_{s=1}=\int_G(g^{s-1})'|_{s=1}d\mu_{f,\Psi,\star}(g)=\int_G\mathrm{log}(g) d\mu_{f,\Psi,\star}(g).\]
On the other hand, by definition of the Coleman $p$-adic line integral associated to the choice of the $p$-adic logarithm $\mathrm{log}$, one has
\[\int_{\bar z_\Psi}^{z_\Psi} f(z)dz=\int_{\PP_1(\Q_p)}\mathrm{log}\left(\frac{a-z_\Psi}{a-\bar z_\Psi}\right)d\mu_f(a)=2\int_G \mathrm{log}(g) d\mu_{f,\Psi,\infty}(g),\]
where in the last equality we used \eqref{corr} and $|\mathrm{Ker}(K_{p,1}^\times\twoheadrightarrow G)|=2$, and the first equality comes from 
the work of Teitelbaum \cite[Sec. 1]{Tei}. The second formula follows immediately from the first 
by integration, since comparing with the definition of the $p$-adic $L$-function we see that 
\[L_p(E/K,\chi,1)=\sum_{i=1}^{h_p} \chi(\sigma_i)L_p(E/K,\Psi_i,\chi,1)\]
because $\chi$ factors through $\Gal(H_p/K)$. 
\end{proof}

Let $P_H\in X(H)$ be a Heegner point of conductor $1$ and recall the point 
$P_\chi\in X(H)\otimes_{\Z}\Z[\chi]$ defined in \S \ref{sec:Heegner}. 
As in \S \ref{sec:Heegner}, denote $y_H\in E(H_\p)$ the 
point obtained as the image of $P_H$ via the modular uniformization $\pi_E$ and then localizing from $E(H)$ to $E(H_\p)$; we also denote $y_\chi$ the point in $E(H_\p)\otimes_{\Z}\Z[\chi]$ 
corresponding to $P_\chi$. The following is the main result of this paper: 

\begin{theorem}For all $\chi\in\mathcal{Z}$ we have 
$L_p'(E/K,\chi,1)=\frac{2}{[H:H_p]}\cdot\mathrm{log}_E(y_\chi/\bar y_\chi)$ up to signs. 
\end{theorem}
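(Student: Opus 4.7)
The plan is to unwind the derivative $L_p'(E/K,\chi,1)$ using Lemma \ref{integr}(\ref{firstder}) and then recognize the resulting sum of Coleman integrals as the formal logarithm of the global point $y_\chi-\bar{y}_\chi$, via the twisted Cerednik--Drinfeld description worked out in \S\ref{sec:twisted} and \S\ref{sec:Heegner}.

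\textbf{Step 1: Express $L_p'$ via Coleman line integrals.} By Lemma \ref{integr}(\ref{firstder}),
\[
L_p'(E/K,\chi,1)\;=\;2\sum_{i=1}^{h_p}\chi(\sigma_i)\int_{\bar z_{\Psi_i}}^{z_{\Psi_i}} f(z)\,dz
\]
up to sign. Since $\Psi_i=\Psi^{\sigma_i^{-1}}$, reindexing by $\sigma=\sigma_i^{-1}$ rewrites this as
\[
L_p'(E/K,\chi,1)\;=\;2\sum_{\sigma\in\Delta}\chi^{-1}(\sigma)\int_{\bar z_{\Psi^\sigma}}^{z_{\Psi^\sigma}} f(z)\,dz.
\]
This is exactly the shape that matches the sum of local contributions at each Galois-conjugate embedding appearing in the description of $P_\chi-\bar{P}_\chi$.

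\textbf{Step 2: Convert the additive integrals to multiplicative ones.} The Coleman line integral and Teitelbaum's multiplicative integral are linked by the identity
\[
\int_{\bar z_\Psi}^{z_\Psi} f(z)\,dz \;=\; \log_q\!\left(\mint_{\bar z_\Psi}^{z_\Psi}\omega_f\right) \;=\; \log_q\!\left(\mint_{\PP^1(\Q_p)}\frac{t-z_\Psi}{t-\bar z_\Psi}\,d\mu_f(t)\right),
\]
used already implicitly in the proof of Lemma \ref{integr}. Applying this term by term and moving the exponents $\chi^{-1}(\sigma)$ inside the $p$-adic logarithm yields
\[
L_p'(E/K,\chi,1)\;=\;2\,\log_q\!\left(\prod_{\sigma\in\Delta}\left(\mint_{\PP^1(\Q_p)}\frac{t-z_{\Psi^\sigma}}{t-\bar z_{\Psi^\sigma}}\,d\mu_f(t)\right)^{\chi^{-1}(\sigma)}\right).
\]

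\textbf{Step 3: Recognize the argument as the Tate parameter of $y_\chi-\bar y_\chi$.} By formula \eqref{coleman1} of \S\ref{sec:Heegner}, the product inside the logarithm is precisely the Tate parameter whose image under $\Phi_{\mathrm{Tate}}$ equals $\tfrac{1}{[H:H_p]}\,\pi_E(P_\chi-\bar P_\chi)$. Since $\log_E\circ\Phi_{\mathrm{Tate}}=\log_q$ and $\log_E$ is a homomorphism, we obtain
\[
L_p'(E/K,\chi,1)\;=\;\frac{2}{[H:H_p]}\,\log_E\!\bigl(\pi_E(P_\chi-\bar P_\chi)\bigr)\;=\;\frac{2}{[H:H_p]}\,\log_E(y_\chi/\bar y_\chi),
\]
where in the last step we use the definition $y_\chi=\pi_E(P_\chi)$ and the standard convention that $\log_E(y/\bar y)$ denotes $\log_E(y)-\log_E(\bar y)$.

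\textbf{Main obstacle.} The routine calculational steps (additive-to-multiplicative, applying $\log_E$) are straightforward, so the real content is the compatibility between the two combinatorial sums: the one indexed by the embeddings $\Psi_i$ used to define the $p$-adic $L$-function (where $\Delta$ acts on embeddings), and the one indexed by Galois translates $z_{\Psi^\sigma}$ coming from the Galois conjugates $P_H^{\tilde\sigma}$ of the Heegner point. The identification $\Psi_i=\Psi^{\sigma_i^{-1}}$ (forcing the character to appear as $\chi^{-1}$ on one side) and the factor $[H:H_p]$ arising from the triviality of the $\Gal(H/H_p)$-action on $\Psi$ must be tracked carefully; this is precisely what has been arranged in \S\ref{sec:Heegner} to yield \eqref{coleman1}, so the proof reduces to invoking it. The sign indeterminacy in Lemma \ref{integr} propagates through and accounts for the ``up to signs'' in the conclusion.
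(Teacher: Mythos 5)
Your proposal is correct and follows exactly the paper's own argument, which consists of the single line ``combine \eqref{coleman1} and Lemma \ref{integr}\eqref{firstder}''; your Steps 1--3 are simply a careful unwinding of that combination, including the reindexing $\Psi_i=\Psi^{\sigma_i^{-1}}$ and the $[H:H_p]$ factor, both of which you track correctly.
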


\begin{proof}
Combine \eqref{coleman1} and Lemma \ref{integr}, \eqref{firstder}. 
\end{proof}

\bibliographystyle{amsalpha}
\bibliography{paper}
\end{document}